\numberwithin{equation}{section}
\theoremstyle{definition}
\newtheorem{Definition}{Definition}[section]
\newtheorem{Example}[Definition]{Example}
\newtheorem{Remark}[Definition]{Remark}
\theoremstyle{plain}
\newtheorem{Theorem}[Definition]{Theorem}
\newtheorem{Proposition}[Definition]{Proposition}
\newtheorem{Lemma}[Definition]{Lemma}
\newcommand{\al}{\alpha}
\newcommand{\be}{\beta}
\newcommand{\Ga}{\Gamma}
\newcommand{\ka}{\kappa}
\newcommand{\la}{\lambda}
\newcommand{\si}{\sigma}
\newcommand{\om}{\omega}
\newcommand{\N}{\mathbb{N}}
\newcommand{\Z}{\mathbb{Z}}
\newcommand{\R}{\mathbb{R}}
\newcommand{\C}{\mathbb{C}}
\newcommand{\T}{\mathbb{T}}
\newcommand{\Be}{\boldsymbol{e}}
\newcommand{\Bi}{\boldsymbol{i}}
\newcommand{\Fsl}{\mathfrak{sl}}
\newcommand{\CA}{\mathcal{A}}
\newcommand{\CI}{\mathcal{I}}
\newcommand{\SL}{\mathscr{L}}
\DeclareMathOperator{\End}{End}
\DeclareMathOperator{\Id}{Id}
\DeclareMathOperator{\ord}{ord}
\DeclareMathOperator{\sgn}{sgn}
\DeclareMathOperator{\Supp}{Supp}
\newcommand{\un}{\underline}
\renewcommand{\tilde}{\widetilde}
\title[Unitarizability of weight modules over noncommutative Kleinian fiber...]{Unitarizability of weight modules over noncommutative Kleinian fiber products}
\author{Jonas T. Hartwig}
\date{\today}
\address{Department of Mathematics, Iowa State University, Ames, IA-50011, USA}
\email{jth@iastate.edu}
\begin{document}
\maketitle
\begin{abstract}
For any $(m,n)$-periodic higher spin six-vertex configuration $\SL$, we construct a one-parameter family $\Delta_\xi$ of  pseudo-unitarizable representations of the corresponding noncommutative fiber product $\CA(\SL)$ by difference operators acting on the space of sections of a complex line bundle $L_\xi$ over the face lattice $F$. The indefinite inner product is given explicitly in terms of a combinatorial sign function defined on $F$. We prove that each simple integral weight $\CA(\SL)$-module (previously classified by the author \cite{Har2016}) occurs as a submodule in one of these representation spaces.
Lastly we give a combinatorial description of the signature of the unique (up to nonzero real multiples) indefinite inner product on any simple integral weight module, in terms of certain eight-vertex configurations canonically attached to $\SL$. In particular we obtain necessary and sufficient conditions for such a module to be unitarizable.
\end{abstract}

\section{Introduction}

By an inner product $\langle\cdot,\cdot\rangle$ on a complex vector space $V$ we shall mean in this paper a (not necessarily positive definite) non-degenerate symmetric sesquilinear form. An inner product on a module $V$ over a $\ast$-algebra $\CA$ is called invariant if $\langle av,w\rangle = \langle v,a^\ast w\rangle$ for all $a\in \CA$ and $v,w\in V$.

An important problem in the representation theory of $\ast$-algebras is the question of existence of such forms on modules, and to determine when such a form is positive definite. Modules which can be equipped with an invariant inner product are called pseudo-unitarizable, and unitarizable if the form can be chosen positive definite. The pseudo-unitarizable modules form essentially a ``real line'' inside the moduli space of all representations (because they are stable under taking the contragredient module). In many cases at most one invariant inner product exists up to equivalence. For example this is the case in general for finite-dimensional indecomposable modules \cite{MazTur2001}.

A classical result states that every complex finite-dimensional representation of (the convolution $\ast$-algebra of complex-valued $L^1$ functions on) a compact topological group $G$ is unitarizable (see e.g. \cite[Prop.~4.6]{Kna1996}). Other examples from Lie theory include the celebrated discrete series of unitary irreducible highest weight modules over the Virasoro algebra \cite{KacRai1987}, and the classification of pseudo-unitarizable simple weight modules with finite-dimensional weight spaces over a semi-simple complex finite-dimensional Lie algebra with respect to the Chevalley involution \cite{MazTur2001b}.

In this paper we consider a family of $\ast$-algebras $\CA(\SL)$ called \emph{noncommutative Kleinian fiber products} \cite{HarRos2016,Har2016}. They depend on a certain vertex configuration $\SL$ and are noncommutative deformations of the algebra of functions on a fiber product of two type $A$ Kleinian singularities \cite{Har2016}. Examples include central extensions of noncommutative Kleinian singularities introduced by Hodges \cite{Hod1993}, and quotients of the enveloping algebra of the affine Lie algebra $A_1^{(1)}$ and of the finite W-algebra $\mathcal{W}(\Fsl_4,\Fsl_2\oplus\Fsl_2)$ \cite{Har2016}. Simple weight $\CA(\SL)$-modules were classified in \cite{Har2016} and are parametrized by pairs $(D,\xi)$ where $D$ is a connected component of a twisted cylinder minus the edges of $\SL$, and $\xi\in\C$. The algebras $\CA(\SL)$ are examples of rank two twisted generalized Weyl algebras \cite{MazTur1999}. Pseudo-unitarizable simple and indecomposable weight modules with real support over noncommutative Kleinian singularities, and more generally arbitrary generalized Weyl algebras of rank one, were classified in \cite{Har2011}, covering in particular $U_q(\Fsl_2)$ at roots of unity $q$. Bounded and unbounded $\ast$-representations of twisted generalized Weyl constructions were studied  in \cite{MazTur2002}.

\subsection{Summary of paper}

In Section \ref{sec:pre} we review the definition of noncommutative Kleinian fiber products as given in \cite{Har2016}.
In Section \ref{sec:Delta} we prove the existence of square roots of the polynomial functions $P_i^\SL$ which still solve the MTE, and use this to construct a one-parameter family $\Delta_\xi$ of representations of $\CA(\SL)$. These representations are shown to be pseudo-unitarizable if $|\xi|=1$ in Section \ref{sec:pseudo}.
In Section \ref{sec:simple-weight} we review the classification of simple integral weight modules  from \cite{Har2016} and determine necessary and sufficient conditions for them to be pseudo-unitarizable. Along the way we prove that the Casimir element $C$ for $\CA(\SL)$ defined in \cite{Har2016} is unitary (Section \ref{sec:casimir}) and prove a polynomial formula for shifts of products of the square roots of $P_i^\SL$ (Lemma \ref{lem:q-ord}).
In Section \ref{sec:semi} we prove that $\Delta_\xi$ are completely reducible and that every simple integral weight $\CA(\SL)$-module occurs as a subrepresentation of $\Delta_\xi$ for some $\xi$.
Lastly Section \ref{sec:signature} contains the description of the signature of the unique (up to nonzero real multiples) invariant inner product on the simple integral weight modules. In particular we obtain necessary and sufficient conditions for them to be unitarizable.
We end with some examples in Section \ref{sec:examples}.

\section{Preliminaries}
\label{sec:pre}

\subsection{Noncommutative Kleinian fiber products}

Let $(m,n)$ be a pair of relatively prime non-negative integers, and $(\al_1,\al_2)=(\al,\be)\in\R^2\setminus\{(0,0)\}$ with $m\al+n\be=0$. Put
\begin{alignat}{2}
F &= \Z\al+\Z\be  &\qquad\qquad V &= F+(\al+\be)/2\\
E_i &= F+\al_i/2  &\qquad\qquad E &= E_1\cup E_2
\end{alignat}

\begin{Definition}
An \emph{$(m,n)$-periodic higher spin vertex configuration} $\SL=(\SL_1,\SL_2)$ is a pair of functions $\SL_i:E_i\to \N=\{0,1,2,\ldots\}$ with $|\SL_i^{-1}([1,\infty))|<\infty$ satisfying the \emph{current conservation rule}:
\begin{equation}\label{eq:CMTE}
\SL_1(v+\be/2)+\SL_2(v+\al/2)=\SL_1(v-\be/2)+\SL_2(v-\al/2)\qquad \text{for all $v\in V$}.
\end{equation}
\end{Definition}

Let $\tilde{\CA}=\tilde{\CA}(\SL)$ be the associative algebra generated by $\{H,X_1^+,X_1^-,X_2^+,X_2^-\}$
subject to defining relations
\begin{equation}\label{eq:rels}
[H,X_i^\pm]=\pm \al_i X_i^\pm\qquad  X_i^\pm X_i^\mp = P_i^\SL(H\mp\al_i/2)\qquad [X_1^\pm,X_2^\mp]=0
\end{equation}
where $[a,b]=ab-ba$ and 
\begin{equation}\label{eq:P-def}
P_i^\SL(u) = \prod_{e\in E_i} (u-e)^{\SL_i(e)}\qquad \text{for $i=1,2$}.
\end{equation}
Let $\CA=\CA(\SL)=\tilde{\CA}/\CI$ where 
\begin{equation}\label{eq:CI-def}
\CI=\{a\in \tilde{\CA}\mid \text{$p(H)a=0$ for some nonzero polynomial $p$}\}.
\end{equation}

\begin{Definition}
$\CA$ is the \emph{noncommutative Kleinian fiber product associated to $\SL$}.
\end{Definition}

Note that $(p_1,p_2)=(P_1^\mathscr{L},P_2^\mathscr{L})$ is a solution to the \emph{Mazorchuk-Turowska Equation (MTE)}
\begin{equation}\label{eq:MTE}
p_1(u+\al_2/2)p_2(u+\al_1/2)=p_1(u-\al_2/2)p_2(u-\al_1/2)
\end{equation}
which is necessary and sufficient for $\CA(\SL)$ to be nontrivial \cite[Prop.~1.11]{Har2016}.
Conversely, up to affine transformations any solution $(p_1,p_2)$ to \eqref{eq:MTE} is a product of such lattice solutions $(P_1^\SL,P_2^\SL)$ \cite{HarRos2016,Har2016}.

\section{Realization by difference operators on line bundles}
\label{sec:Delta}

In this section we construct a natural family of representations $\Delta_\xi$ of $\CA(\mathscr{L})$ by difference operators acting on global sections of a complex line bundle $L_\xi$ over the one-dimensional lattice $F$. Later we show that every irreducible integral weight representation of $\CA(\mathscr{L})$ is a subrepresentation of $\Delta_\xi$ for appropriate $\xi$. Thus this provides a concrete realization of all simple integral weight modules.

The key result in the construction of $\Delta_\xi$, established in Section \ref{sec:sqrt}, is the existence of square roots (in fact, logarithms) of solutions to the MTE. The subtlety lies in proving that there exists a consistent choice square root $P_i^\SL(e)^{1/2}$ in such a way that the pair of functions still solve the MTE. This choice can be expressed combinatorially directly in terms of the configuration $\SL$ (Remark \ref{rem:l}) and gives rise to both the fundamental symmetry $J$ of the inner product space (Remark \ref{rem:J}) and formulas for the signature of the inner product given Section \ref{sec:signature}.

\subsection{$\sqrt{\text{MTE}}$} \label{sec:sqrt}
The following shows that solutions to the Mazorchuk-Turowska equation have square roots that are also solutions.

\begin{Lemma} \label{lem:square-roots}
There exists a pair of functions $(q_1,q_2)$, $q_i:E_i\to\C$ such that
\begin{enumerate}[{\rm (i)}]
\item $[q_i(e)]^2=P_i^\SL(e)$ for all $e\in E_i$ and $i\in\{1,2\}$,
\item $(q_1,q_2)$ is a solution to the MTE \eqref{eq:MTE}.
\end{enumerate}
\end{Lemma}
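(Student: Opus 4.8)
The plan is to work multiplicatively. Since $P_i^\SL(u)=\prod_{e\in E_i}(u-e)^{\SL_i(e)}$, the values $P_i^\SL(e)$ are products of nonzero (once restricted to $e'\ne e$) lattice-difference factors, and the MTE \eqref{eq:MTE} is an identity between products of linear factors. The key observation is that the MTE for $(P_1^\SL,P_2^\SL)$, when all factors are written out, is an equality of two multisets of linear factors $(u-e)$ coming from the lattice $F$ together with the half-shifts. So I would first set up a bijection of factors: for each $v\in V$ the factor data on the two sides of \eqref{eq:MTE} match up using the current conservation rule \eqref{eq:CMTE}. Concretely, one writes $P_i^\SL$ as a product over the (finite) support of $\SL_i$ and tracks how a shift by $\pm\al_j/2$ moves a point of $E_i$ to an adjacent point.

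Next I would \emph{define} $q_i$ by choosing, for each $e\in E_i$, a square root of each individual linear value and a global sign. More precisely, fix once and for all a branch: write $e = k\al + l\be + \al_i/2$ for integers $k,l$ (using $F=\Z\al+\Z\be$), and set $q_i(e) = \epsilon_i(e)\prod_{e'\in E_i}(e-e')^{\SL_i(e')/2}$ where each $(e-e')^{1/2}$ is a fixed choice (e.g. principal branch applied to the real or complex number $e-e'$, noting $\al,\be$ are real) and $\epsilon_i(e)\in\{\pm1\}$ is a sign function on $E_i$ to be pinned down. Property (i) is then automatic for any choice of $\epsilon_i$. The content is property (ii): substituting into \eqref{eq:MTE}, the product of chosen square-root factors on the left will agree with that on the right \emph{up to a sign} $\eta(u)\in\{\pm1\}$ that is locally constant in $u$ on each arithmetic progression in $F$, i.e. $\eta$ is a function on $V$ valued in $\{\pm1\}$. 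The task becomes: solve the coboundary equation $\epsilon_1(v+\be/2)\epsilon_2(v+\al/2)\big/\big(\epsilon_1(v-\be/2)\epsilon_2(v-\al/2)\big) = \eta(v)$ for all $v\in V$. Because $F$ (hence $V$) is a rank-one lattice modulo the periodicity from $m\al+n\be=0$ — $V$ is essentially a copy of $\Z$ or a cyclic group $\Z/N\Z$ — this is a discrete first-order difference equation, and it is solvable iff the product of $\eta$ around the (single, if $\gcd(m,n)=1$) cycle is $+1$. So the real work is to verify that this monodromy obstruction vanishes: the total sign $\prod_{v}\eta(v)$ over one period is $1$. This I expect to follow from the fact that $(P_1^\SL,P_2^\SL)$ itself is a genuine (sign-$+1$) solution of the MTE — squaring the would-be identity $q_1(v+\be/2)q_2(v+\al/2)=\pm q_1(v-\be/2)q_2(v-\al/2)$ recovers the true MTE, so the ambiguity is exactly a $\{\pm1\}$-torsor, and telescoping around the loop the $\epsilon$'s cancel, forcing $\prod\eta=1$.

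The main obstacle is precisely this monodromy/consistency check: showing the product of the local sign discrepancies $\eta$ around the fundamental cycle of the twisted cylinder is trivial, so that a consistent global choice of square roots $q_i$ exists. Equivalently, one must show the relevant class in $H^1$ of the cycle with $\Z/2\Z$ coefficients vanishes. I anticipate this is where the hypothesis $\gcd(m,n)=1$ (so there is a single cycle) and the finiteness $|\SL_i^{-1}([1,\infty))|<\infty$ (so only finitely many factors are nontrivial and the infinite products are really finite) both get used, and where the combinatorial description promised in Remark~\ref{rem:l} — an explicit formula for $\epsilon_i$, or equivalently for $\log$ of the $q_i$ in terms of $\SL$ — comes from. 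Once existence of a consistent sign function $\epsilon_i$ is established, one reads off $q_i$ and both (i) and (ii) hold by construction; uniqueness up to an overall sign and up to $\{\pm1\}$-valued solutions of the homogeneous equation is then immediate but not needed for the statement.
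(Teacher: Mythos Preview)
Your cohomological framing is reasonable—choose square roots up to signs $\epsilon_i\in\{\pm1\}$ and reduce (ii) to a coboundary equation—but the argument you give for the key step does not go through. First, the constraint $\epsilon_1(v+\be/2)\epsilon_1(v-\be/2)\epsilon_2(v+\al/2)\epsilon_2(v-\al/2)=\eta(v)$ is not a first-order difference equation: the shifts $\al$ and $\be$ have different magnitudes in the rank-one lattice $F\cong\Z$, so two unknown functions and several residue classes are genuinely coupled. Second, $V$ is a single coset of $F\cong\Z$, hence free—never $\Z/N\Z$—so there is no ``fundamental cycle'' to traverse; and your telescoping claim is circular, since it presupposes the existence of the $\epsilon_i$ in order to deduce $\prod\eta=1$, which is exactly what must be established independently of $\epsilon$. (The obstruction does in fact vanish, but for a different reason: there are no nontrivial finite syzygies among the constraints, because any finite $S\subset V$ with $\prod_{v\in S}(\text{LHS at }v)\equiv 1$ for all $\epsilon$ must satisfy $S=S+\al=S+\be$, forcing $S=\emptyset$. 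An argument of that sort is what would actually be needed to complete your approach.)

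The paper takes an entirely different, constructive route. It introduces explicit integer-valued functions $l_i(u)=\sum_{e\in E_i,\,e>u}\SL_i(e)$ and proves two facts: (a) $P_i^\SL(u)/|P_i^\SL(u)|=(-1)^{l_i(u)}$, and (b) the $l_i$ satisfy the \emph{additive} current-conservation identity $l_1(v+\be/2)+l_2(v+\al/2)=l_1(v-\be/2)+l_2(v-\al/2)$, established by reducing (via additivity in $\SL$) to a single generalized Dyck path and then inducting over $21\to 12$ corner flips. One then sets $q_i(e)=\exp\!\big(2\pi\Bi\,l_i(e)/4\big)\,|P_i^\SL(e)|^{1/2}$; property (i) follows from (a), and the additive identity (b) immediately yields the multiplicative MTE for $(q_1,q_2)$. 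Note that the phase here is a fourth root of unity, not just $\pm1$; with your principal-branch convention it is exactly $\Bi^{\,l_i(e)}$, so in effect your $\epsilon_i\equiv 1$ already works—but proving this is precisely the content of (b). Beyond the present lemma, the explicit $l_i$ are reused throughout the paper: they determine the weight function $\mathsf{w}$ of the invariant inner product and drive the signature computations in Section~\ref{sec:signature}, information a bare existence argument would not supply.
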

\begin{proof}
Put $p_i(u)=P_i^\SL(u)$ for $i=1,2$.
Consider the following pair of functions $(l_1,l_2)$:
\begin{equation}\label{eq:li-def}
l_i(u)=\sum_{e\in E_i,\, e>u} \mathscr{L}_i(e)\qquad \text{for $u\in E_i$ and $i=1,2$.}
\end{equation}
Here $e>u$ is the usual order on $\R$.
We claim that these satisfy the following two properties:
\begin{equation}\label{eq:PvsL}
\frac{p_i(u)}{|p_i(u)|} = (-1)^{l_i(u)}
\end{equation}
and
\begin{equation}\label{eq:li-CMTE}
l_1(v+\al_2/2)+l_2(v+\al_1/2)=l_1(v-\al_2/2)+l_2(v-\al_1/2)\qquad \text{for all $v\in V$.}
\end{equation}
To check \eqref{eq:PvsL}, use the definition of $p_i(u)$ and that 
\begin{equation}
\frac{(u-e)^{\mathscr{L}_i(e)}}{|u-e|^{\mathscr{L}_i(e)}}=
\begin{cases}(-1)^{\mathscr{L}_i(e)}& e>u\\
1& e<u
\end{cases}
\end{equation}
To prove \eqref{eq:li-CMTE}, substituting \eqref{eq:li-def} into \eqref{eq:li-CMTE} and cancelling terms we obtain the following, where we assumed WLOG that $\al_1\in\Z_{<0}$ and $\al_2\in\Z_{>0}$:
\begin{equation}\label{eq:li-CMTE-2}
\sum_{\substack{e\in E_1\\ v-\al_2/2<e\le v+\al_2/2}}  \mathscr{L}_1(e) =
\sum_{\substack{e\in E_1\\ v+\al_1/2<e\le v-\al_1/2}}  \mathscr{L}_2(e)\qquad \text{for all $v\in V$.}
\end{equation}
Since this equation is additive in $\mathscr{L}$, we may without loss of generality assume that $\mathscr{L}$ consists of a single generalized Dyck path of period $(m,n)$. 
To this end, it is easy to verify \eqref{eq:li-CMTE-2} for the maximum area path consisting of $n$ steps of $\al_2$ followed by $m$ steps of $\al_1$. An induction argument shows that if $(l_1,l_2)$ solves \eqref{eq:li-CMTE} for a certain generalized Dyck path $\mathscr{L}$, then it also holds for the path in which a $21$ step has been replaced with a $12$ step. This proves the claim.
Now define
\begin{equation}\label{eq:q_i}
q_i(e)=\exp(2\pi\Bi \tfrac{l_i(e)}{4}) |p_i(e)|^{1/2}\qquad \text{for all $e\in E_i$ and $i=1,2$.}
\end{equation}
where $\Bi^2=-1$. Then $(q_1,q_2)$ satisfies the required properties.
\end{proof}

\begin{Remark}
Actually the proof shows one can take $N$:th roots of solutions too:
 \[p_i^{1/N}(e)=\exp(2\pi\Bi \tfrac{l_i(e)}{2N}) |p_i(e)|^{1/N}.\]
\end{Remark}

\begin{Remark}\label{rem:l}
The combinatorial interpretation of the functions $l_i:E_i\to\N$ is as follows. For each vertical edge $e\in E_1$, $l_1(e)$ counts the number (with multiplicity) of vertical edges in $\SL$ lying above the straight line through $e$ of slope $n/m$. Similarly for horizontal edges and $l_2(e)$, $e\in E_2$. (The ``location'' of an edge is by convention its midpoint.) See Figure \ref{fig:52-e}.
\end{Remark}

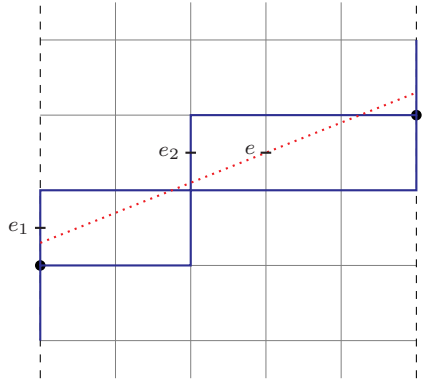
\begin{figure}
\centering
\begin{tikzpicture}
\foreach \y in {-1,0,...,3} {
\draw[help lines] (0 cm,\y cm) -- (5 cm,\y cm); }
\foreach \x in {1,...,4} {
\draw[help lines] (\x cm,-1.5 cm) -- (\x cm,3.5 cm); }
\draw[dashed] (0,-1.5 cm) -- (0,3.5cm);
\draw[dashed] (5,-1.5 cm) -- (5,3.5cm);
\fill (0,0) circle (2pt);
\fill (5,2) circle (2pt);
\draw[thick,Blue] (0,-1) -- (0,0) -- (2,0) -- (2,1) -- (5,1) -- (5,2);
\draw[thick,Blue] (0,0) -- (0,1) -- (2,1) -- (2,2) -- (5,2) -- (5,3);
\draw[thick,Red,dotted] (0, .3) -- (5,2.3); 
\draw[thick,Black] (3,1.55) node[font=\scriptsize, left] {$e$};
\draw[thick,Black] (3cm-2pt,1.5) -- (3cm+2pt,1.5);
\draw[thick,Black] (-2pt,.5) -- (0,.5) node[font=\scriptsize,left] {$e_1$} -- (2pt,.5);
\draw[thick,Black] (2cm-2pt,1.5) -- (2,1.5) node[font=\scriptsize,left] {$e_2$} -- (2cm+2pt,1.5);
\end{tikzpicture}
\caption{A fundamental domain for a $(5,2)$-periodic vertex configuration $\SL$ (solid blue). Here the vertical edge $e$  has $l_1(e)=2$ because there are two vertical edges, $e_1$ and $e_2$, appearing in $\SL$ above the line (dotted red) through $e$ of slope $5/2$. }
\label{fig:52-e}
\end{figure}

\subsection{Construction of the representation $\Delta_\xi$}
On the discrete space $F$ we define a complex line bundle $L_\xi$ as follows.
Fix $\xi\in\C^\times$. Let the abelian group $\Z$ act on $\Z^2\times \C$ by 
\begin{equation}
1.(a,b,z)=(a+m,b+n,\xi z)
\end{equation}
and define $L_\xi = (\Z^2\times\C)/\Z$ with bundle map $L_\xi\to F$ induced by $(x,y,z)\mapsto x\al+y\be$.
Let $\Ga(L_\xi)$ be the corresponding vector space of global sections.
Since $(x,y)\mapsto x\al+y\be$ induces a bijection $\Z^2/\langle(m,n)\rangle\simeq F$, we make the identification
\begin{equation}
\Ga(L_\xi)=\{f:\Z^2\to\C\mid \forall (x,y)\in\Z^2:\, f(x-m,y-n)=\xi\cdot f(x,y) \}.
\end{equation}
It is easy to see that $\Ga(L_\xi)$ consists of all functions of the form
\begin{equation}
f(x,y)=\tilde{f}(x\al+y\be)\exp\left(\frac{-xm-yn}{m^2+n^2}\log\xi\right)
\end{equation}
where $\tilde{f}:\Z\to\C$ is any function and $\log\xi\in\C$ is any choice of logarithm. Consider the subspace
\begin{equation}
\Ga_0(L_\xi)=\big\{f\in\Ga(L_\xi)\mid \text{$\tilde{f}$ has compact ($=$finite) support}\big\}.
\end{equation}
One checks that a $\C$-basis for $\Ga_0(L_\xi)$ is given by $\{f_\la\mid \la\in F\}$ where
\begin{equation}\label{eq:fk}
f_\la(x,y)=\delta_{x\al+y\be,\la}\exp\left(\frac{-xm-yn}{m^2+n^2}\log\xi\right)\qquad\text{for $\la\in F$}.
\end{equation}

The following theorem shows that the algebra $\CA(\mathscr{L})$ acts naturally on $\Ga_0(L_\xi)$ by difference-multiplication operators.

\begin{Theorem}
For each $\xi\in\C^\times$, there exists a representation
\begin{equation}
\Delta_\xi: \CA(\mathscr{L}) \to \End_\C\big(\Ga_0(L_\xi)\big)
\end{equation}
uniquely determined by 
\begin{subequations}\label{eq:Delta-def}
\begin{align}
\big(\Delta_\xi(X_1^\pm) f\big)(x,y) &= q_1(x \al+y\be \mp \al/2) \cdot f(x\mp 1,y), \\
\big(\Delta_\xi(X_2^\pm) f\big)(x,y) &= q_2(x \al+y\be \mp \be/2) \cdot f(x,y\mp 1), \\ 
\big(\Delta_\xi(H)f\big)(x,y) &= (x\al + y\be)\cdot f(x,y),
\end{align}
\end{subequations}
where the $q_i$ where defined in \eqref{eq:q_i}.
\end{Theorem}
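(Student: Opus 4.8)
The plan is to verify directly that the assignment $\Delta_\xi$ in \eqref{eq:Delta-def} respects the defining relations \eqref{eq:rels} of $\CA(\SL)$, and that the operators are well-defined endomorphisms of $\Ga_0(L_\xi)$. First I would check that each of the operators on the right-hand sides of \eqref{eq:Delta-def} actually maps $\Ga_0(L_\xi)$ to itself: this amounts to confirming compatibility with the periodicity condition $f(x-m,y-n)=\xi f(x,y)$, which follows since the shifts $(x,y)\mapsto(x\mp1,y)$ and $(x,y)\mapsto(x,y\mp1)$ commute with the $\Z$-action, and the multipliers $q_i(x\al+y\be\mp\al_i/2)$ depend only on the point $x\al+y\be\in F$ (note $q_i$ is defined on $E_i\subset\R$, and $x\al+y\be\mp\al_i/2\in E_i$), hence are $\Z$-periodic in the appropriate sense; finiteness of support of $\tilde f$ is clearly preserved.

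Next I would verify the three families of relations. The relation $[H,X_i^\pm]=\pm\al_i X_i^\pm$ is a one-line computation: applying $\Delta_\xi(H)\Delta_\xi(X_i^\pm)$ and $\Delta_\xi(X_i^\pm)\Delta_\xi(H)$ to $f$ and evaluating at $(x,y)$, the difference of the scalar multipliers is $(x\al+y\be)-((x\mp1)\al+y\be)=\pm\al$ for $i=1$ (and similarly $\pm\be$ for $i=2$), which is exactly $\pm\al_i$. The commutation $[X_1^\pm,X_2^\mp]=0$ holds because the two difference operators shift independent coordinates and the multiplier picked up is the same regardless of order: composing the shifts in $x$ and $y$ gives multiplier $q_1(\cdot\mp\al/2)\,q_2(\cdot'\mp\be/2)$ evaluated at the appropriately shifted argument, and one checks the total multiplier is symmetric in the two operations (this is where one uses that the $q_i$ are functions only of the lattice point, not of the order of shifts). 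The relation $X_i^\pm X_i^\mp=P_i^\SL(H\mp\al_i/2)$ is the substantive one: applying $\Delta_\xi(X_i^+)\Delta_\xi(X_i^-)$ to $f$ at $(x,y)$ produces the multiplier $q_i(x\al+y\be-\al_i/2)\cdot q_i(x\al+y\be-\al_i/2)=q_i(x\al+y\be-\al_i/2)^2$, which by Lemma \ref{lem:square-roots}(i) equals $P_i^\SL(x\al+y\be-\al_i/2)=P_i^\SL\big((x\al+y\be)-\al_i/2\big)$, matching $\Delta_\xi\big(P_i^\SL(H-\al_i/2)\big)f$ at $(x,y)$; the opposite order is analogous with a $+\al_i/2$ shift.

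The point where Lemma \ref{lem:square-roots}(ii) — that $(q_1,q_2)$ solves the MTE — is genuinely needed is subtler, and I expect this to be the main obstacle: the relations \eqref{eq:rels} as literally written do not visibly invoke the MTE, but the passage from $\tilde\CA$ to $\CA=\tilde\CA/\CI$ does, and one must confirm that $\Delta_\xi$ factors through $\CI$, i.e. that $\Delta_\xi$ kills every element annihilated by a nonzero polynomial in $H$. Since $\Delta_\xi(H)$ acts on the basis vector $f_\la$ by the scalar $\la\in F$ and these scalars are all distinct (indeed $\Ga_0(L_\xi)$ is an $H$-weight module with one-dimensional weight spaces and no polynomial in $H$ annihilates any nonzero vector), the operator $p(\Delta_\xi(H))$ is injective for nonzero $p$, so $\Delta_\xi(\CI)=0$ automatically. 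Thus $\Delta_\xi$ descends to $\CA(\SL)$. Uniqueness is immediate since $\CA(\SL)$ is generated by $H,X_i^\pm$ and their images are prescribed. The only place the MTE for $(q_1,q_2)$ re-enters is as a consistency check that the prescription is not over-determined — it is automatically consistent here because $\Ga_0(L_\xi)$ is a free module over the multiplier algebra — so in fact condition (ii) of Lemma \ref{lem:square-roots} is used implicitly through the fact (established in \cite{Har2016}) that the MTE is what makes $\CA(\SL)$ nontrivial in the first place; I would remark on this rather than belabor it.
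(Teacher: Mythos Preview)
Your proposal contains two genuine gaps.

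\textbf{The commutation $[X_1^\pm,X_2^\mp]=0$.} You claim this holds because the shifts are in independent coordinates and the total multiplier is ``symmetric in the two operations'', but this is precisely where the argument breaks down: the multiplier depends on the \emph{intermediate} lattice point, which differs in the two orders. Concretely, with $\la=x\al+y\be$,
\[
\Delta_\xi(X_1^+)\Delta_\xi(X_2^-)\ \text{picks up}\ q_1(\la-\tfrac{\al}{2})\,q_2(\la-\al+\tfrac{\be}{2}),
\]
whereas
\[
\Delta_\xi(X_2^-)\Delta_\xi(X_1^+)\ \text{picks up}\ q_2(\la+\tfrac{\be}{2})\,q_1(\la+\be-\tfrac{\al}{2}).
\]
Setting $v=\la-\al/2+\be/2$, equality of the two is exactly the MTE $q_1(v-\be/2)q_2(v-\al/2)=q_1(v+\be/2)q_2(v+\al/2)$ for $(q_1,q_2)$. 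This is where Lemma~\ref{lem:square-roots}(ii) is genuinely needed, and the paper flags it as ``the most crucial calculation''. Your closing remark that the MTE for $(q_1,q_2)$ enters only implicitly is therefore incorrect.

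\textbf{Factoring through $\CI$.} You assert that $p(\Delta_\xi(H))$ is injective for every nonzero polynomial $p$ because ``no polynomial in $H$ annihilates any nonzero vector'', but each basis vector $f_\la$ is annihilated by $H-\la$; in fact every element of $\Ga_0(L_\xi)$ is $\C[H]$-torsion. If the annihilating polynomial $g$ for $a\in\CI$ has a root in $F$ (the typical situation), your argument does not conclude. The paper's fix is to reduce to homogeneous $a\in\CI_d$, so that $\Delta_\xi(a)$ acts as a shift composed with multiplication by some function $h$; one then deduces $g(\la)h(\la)=0$ for all $\la\in F$, and uses that $h$ is real-analytic for $\la\gg 0$ (from the explicit formula \eqref{eq:q_i} for $q_i$) to upgrade ``$h=0$ off the finitely many zeros of $g$'' to $h\equiv 0$.
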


\begin{proof}
For brevity, put $\tilde{X}_i^\pm =\Delta_\xi(X_i^\pm)$, $\tilde{H}=\Delta_\xi(H)$ and $p_i=P_i^\mathscr{L}$.
We have
\begin{align*}
\big(\tilde{X}_1^\pm \tilde{X}_1^\mp f\big)(x,y)
 &= q_1(x\al+y\be\mp\al/2)\cdot (\tilde{X}_1^\mp f)(x\mp 1,y)  \\
 &= q_1(x\al+y\be\mp\al/2)
    q_1((x\mp 1)\al+y\be\pm\al/2) \cdot f(x,y)  \\
 &= p_1(x\al+y\be\mp\al/2) \cdot f(x,y)  \\
 &= \big(p_1(\tilde{H}\mp\al/2)f\big)(x,y).
\end{align*}
This shows that $\tilde{X}_1^\pm \tilde{X}_1^\mp = p_1(\tilde{H}\mp\al/2)$.
Similarly one checks that $\tilde{X}_2^\pm \tilde{X}_2^\mp = p_2(\tilde{H}\mp\be/2)$.

Next we verify that $[\tilde{H},\tilde{X}_i^\pm] = \pm \al_i \tilde{X}_i^\pm$ where $(\al_1,\al_2)=(\al,\be)$ for brevity. We have
\begin{align*}
\big([\tilde{H},\tilde{X}_1^\pm]f\big)(x,y)
 &= (x\al+y\be)\cdot (\tilde{X}_1^\pm f)(x,y) - q_1(x\al+y\be\mp \al/2) (\tilde{H}f)(x\mp 1,y) \\ 
 &= (x\al+y\be)q_1(x\al+y\be\mp \al/2) f(x\mp 1,y) \\
 & \quad - q_1(x\al+y\be\mp \al/2)  ((x\mp 1)\al+y\be) f(x\mp 1,y) \\
 &= \pm \al  q_1(x\al+y\be\mp \al/2) f(x\mp 1,y) \\ 
 &= (\pm \al \tilde{X}_1^\pm f)(x,y)
\end{align*}
and similarly for $\tilde{X}_2^\pm$.

Next, the most crucial calculation is to verify that $[\tilde{X}_1^\pm,\tilde{X}_2^\mp]=0$ which is where we need that $(q_1,q_2)$ satisfy the MTE \eqref{eq:MTE}.
\begin{align*}
\big([\tilde{X}_1^+,\tilde{X}_2^-]f\big)(x,y) &=
(\tilde{X}_1^+ \tilde{X}_2^- f)(x,y) - (\tilde{X}_2^- \tilde{X}_1^+ f)(x,y) \\ 
&=q_1(x\al+y\be -\al/2)(\tilde{X}_2^- f)(x- 1,y) - q_2(x\al+y\be+\be/2)(\tilde{X}_1^+f)(x,y+1)\\
&=q_1(x\al+y\be-\al/2)q_2(x\al+y\be-\al+\be/2)f(x- 1, y+ 1) \\
&\quad- q_1(x\al+y\be - \al/2 + \be) q_2(x\al+y\be+\be/2) f(x- 1, y+ 1)\\
&=\big(q_1(v-\be/2)q_2(v-\al/2)-q_1(v+\be/2)q_2(v+\al/2)\big)f(x+ 1,y- 1)\\
&=0
\end{align*}
where we put $v=x\al+y\be - \al/2 +\be/2$. By $1\leftrightarrow 2$ the other case also holds.
This shows that \eqref{eq:Delta-def} defines a homomorphism $\Delta_\xi:\tilde{\CA}(\SL)\to\End_\C\big(\Ga_0(L_\xi)\big)$.

It remains to show that the torsion ideal $\CI$ in \eqref{eq:CI-def} is in the kernel of $\Delta_\xi$. Since $\CI$ is a graded ideal with respect to the $\Z^2$-gradation on $\tilde{\CA}(\SL)$ given by $\deg X_i^\pm=\pm\Be_i$, $\deg H=0$, this amounts to proving that
if  $d=(d_1,d_2)\in\Z^2$ and $a\in\tilde{\CA}(\SL)_d$ belongs to $\CI$, then $\Delta_\xi(a)=0$.
Since $a\in\CI$ there exists a nonzero polynomial $g$ such that $g(H)\cdot a=0$ in $\tilde{\CA}(\SL)$. Applying $\Delta_\xi$ we obtain
\begin{equation}\label{eq:rep-pf0}
g(\tilde{H})\cdot \Delta_\xi(a)=0.
\end{equation}
In \eqref{eq:rep-pf0}, acting on an arbitrary $f\in\Ga_0(L_\xi)$ gives
\begin{equation}
g(x\al+y\be)\cdot \big(\Delta_\xi(a)f\big)(x,y) = 0.
\end{equation}
By \eqref{eq:Delta-def} there exists a function $h:F\to\C$ such that 
\begin{equation}\label{eq:rep-pf-h}
\big(\Delta_\xi(a)f\big)(x,y) = h(x\al+y\be) f(x+d_1,y+d_2)
\end{equation}
hence
\begin{equation}
g(x\al+y\be)h(x\al+y\be) f(x+d_1,y+d_2) = 0.
\end{equation}
Choosing $f$ as the basis vectors $f_\la$ defined in \eqref{eq:fk}, we obtain
\begin{equation} \label{eq:rep-pf-gh}
g(\la)h(\la)=0 \qquad\text{for all $\la\in F$.}
\end{equation}
By \eqref{eq:q_i} and \eqref{eq:Delta-def}, $h(\la)$ given in \eqref{eq:rep-pf-h} is real analytic in a region $\la>N$ for $N\gg 0$ while $g$ is a non-zero polynomial, so \eqref{eq:rep-pf-gh} implies that $h$ is identically zero. This shows that $\Delta_\xi(a)=0$. This completes the proof of the existence of the homomorphism $\Delta_\xi$. The uniqueness follows from the fact that $\CA(\SL)$ is generated by the elements $X_i^\pm$ and $H$.
\end{proof}

\section{Pseudo-unitarizability of $\Delta_\xi$} \label{sec:pseudo}

In Section \ref{sec:pseudo-general} we review the basic definitions needed for the following section, where we prove that $\Delta_\xi$ is pseudo-unitarizable when $|\xi|=1$.

\subsection{Pseudo-unitarizable modules over $\ast$-algebras} \label{sec:pseudo-general}

In this subsection let $\CA$ denote a \emph{$\ast$-algebra}, by which we mean an associative unital algebra over $\C$ equipped with a conjugate-linear map $\CA\to \CA, a\mapsto a^\ast$ satisfying
\begin{equation}
(ab)^\ast = b^\ast a^\ast\qquad (a^\ast)^\ast = a\qquad \text{for all $a,b\in\CA$.}
\end{equation}

\begin{Definition} \label{def:inner-product}
Let $M$ be a module over $\CA$.
By an \emph{inner product} on $M$,
\[\langle \cdot,\cdot \rangle:M\times M\to \C\]
we mean a non-degenerate symmetric sesquilinear form:
\begin{enumerate}[{\rm (i)}]
\item $\langle \la u+\mu v, w\rangle=\la \langle u,w\rangle + \mu \langle v,w\rangle$ for all $u,v,w\in M$ and $\la,\mu\in \C$,
\item $\langle v,w\rangle = \overline{\langle w,v\rangle}$ for all $v,w\in M$, where the bar denotes complex conjugation,
\item if $\langle v,w\rangle=0$ for all $v\in M$ then $w=0$.
\end{enumerate}
An inner product $\langle\cdot,\cdot\rangle$ on $M$ is called ($\ast$-)\emph{invariant} if
\begin{enumerate}[{\rm (i)}]
\setcounter{enumi}{3}
\item $\langle av,w\rangle = \langle v,a^\ast w\rangle$ for all $a\in\CA$, $v,w\in M$
\end{enumerate}
and \emph{positive definite} if
\begin{enumerate}[{\rm (i)}]
\setcounter{enumi}{4}
\item $\langle v,v\rangle>0$ for all nonzero $v\in V$.
\end{enumerate}
\end{Definition}

\begin{Definition} \label{def:unitar}
Let $M$ be an $\CA$-module. Then $M$ is \emph{pseudo-unitarizable} if there exists an invariant inner product on $M$ and \emph{unitarizable} if there exists a positive definite invariant inner product on $M$.
\end{Definition}

\begin{Definition} \label{def:dual}
The \emph{finitistic dual} of an $\CA$-module with a decomposition $M=\bigoplus_{\la\in\C} M_\la$, $\dim_\C M_\la<\infty$, is defined as
\[M^\#=\bigoplus_{\la\in\C} M^\#_\la,\qquad M^\#_\la=\{f:M_\la \to\C\mid \text{$f$ is conjugate-linear}\}\]
with $\CA$-action 
\[(af)(v)=f(a^\ast v),\quad\forall a\in \CA, f\in M^\#_\la, v\in M_\la, \la\in\C.\]
\end{Definition}

\begin{Theorem}\label{thm:pseudo-unitarizability}
\begin{enumerate}[{\rm (a)}]
\item $M$ is pseudo-unitarizable if and only if $M^\#\simeq M$.
\item If $M$ is indecomposable there is at most one invariant inner product on $M$, up to equivalence.
\end{enumerate}
\end{Theorem}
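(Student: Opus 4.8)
The plan is to set up a precise dictionary between invariant inner products on $M$ and certain isomorphisms $M\to M^\#$, and then read off both statements. First I would check that $M^\#$ is genuinely an $\CA$-module: the defining formula $(af)(v)=f(a^\ast v)$ is $\C$-linear in $a$ since $a\mapsto a^\ast$ is conjugate-linear and $f$ is conjugate-linear, and the antihomomorphism property $(ab)^\ast=b^\ast a^\ast$ together with $(a^\ast)^\ast=a$ gives $(abf)=a(bf)$; the grading is respected because $a^\ast$ acts between the appropriate weight spaces (here I would note that the grading on $\CA$ is by an abelian group and $\ast$ sends a graded piece to its negative, so $a^\ast M_\mu\subseteq M_{\mu'}$ for the relevant $\mu'$, keeping each $M^\#_\la$ well-defined). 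The finiteness hypothesis $\dim_\C M_\la<\infty$ is what makes $M^\#$ a legitimate object of the same category (so that $M^{\#\#}\simeq M$ canonically).

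For part (a): given an invariant inner product $\langle\cdot,\cdot\rangle$ on $M$, define $\Phi:M\to M^\#$ by $\Phi(v)(w)=\langle w,v\rangle$. By condition (i) in Definition \ref{def:inner-product}, $\Phi(v)$ is conjugate-linear in $w$, so $\Phi(v)\in M^\#$; by condition (ii) (Hermitian symmetry), $\Phi$ is $\C$-linear in $v$; by nondegeneracy (iii), $\Phi$ is injective, and since $\dim M_\la=\dim M^\#_\la<\infty$ it is a bijection on each weight space, hence an isomorphism of vector spaces; and by invariance (iv), $\Phi(av)(w)=\langle w,av\rangle=\langle a^\ast w,v\rangle=\Phi(v)(a^\ast w)=(a\Phi(v))(w)$, so $\Phi$ is $\CA$-linear. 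Conversely, given an isomorphism $\Phi:M\xrightarrow{\sim}M^\#$, set $\langle w,v\rangle:=\Phi(v)(w)$; this is conjugate-linear in $v$ (as $\Phi$ is linear and elements of $M^\#$ are conjugate-linear functionals) and linear in $w$, and $\CA$-linearity of $\Phi$ gives invariance. The only subtlety is symmetry: a general $\Phi$ need not satisfy $\Phi(v)(w)=\overline{\Phi(w)(v)}$. To fix this I would use the canonical identification $\iota:M\xrightarrow{\sim}M^{\#\#}$ and observe that $\Phi^\#\circ\iota:M\to M^\#$ is another isomorphism; by averaging, $\Phi+\Phi^\#\circ\iota$ (or, if that vanishes, $\Bi(\Phi-\Phi^\#\circ\iota)$) is a nonzero, hence still bijective, $\CA$-linear self-adjoint-type map, from which a genuine (symmetric, nondegenerate, invariant) inner product is obtained. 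I expect this symmetrization step to be the main obstacle, since one must argue the averaged map stays an isomorphism — this is where one invokes that $M$ is a module over a field (so a nonzero scalar multiple or sum of isomorphisms with the same source and target, if nonzero on each finite-dimensional weight space, is again an isomorphism), and one may need to treat the two weight spaces paired by $\ast$ jointly.

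For part (b): if $M$ is indecomposable, then so is $M^\#$, and $\End_\CA(M)$ is a local ring (its non-units form an ideal — this is the Fitting/Azumaya-type statement available once each $M_\la$ is finite-dimensional so that idempotents split). Two invariant inner products $\langle\cdot,\cdot\rangle_1,\langle\cdot,\cdot\rangle_2$ give isomorphisms $\Phi_1,\Phi_2:M\to M^\#$, so $\Phi_2^{-1}\Phi_1\in\Aut_\CA(M)$; I would then show this automorphism, composed with the self-adjointness built into both forms via the $\iota,\#$ machinery from part (a), is forced to be a real scalar (using that a unit $u$ in the local ring $\End_\CA(M)$ satisfying a relation like $u^\ast=u$ with respect to the natural involution on $\End_\CA(M)$ induced by the forms must lie in $\R^\times\cdot\Id$ modulo the radical, and then a limiting/polynomial-identity argument pins it exactly). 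The appropriate notion of ``equivalence'' of forms here is precisely rescaling by $\R^\times$ together with composition with an automorphism, so this yields uniqueness up to equivalence as claimed. I would cite \cite{MazTur2001} for the indecomposable case rather than reprove the local-ring input in detail.
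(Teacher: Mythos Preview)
The paper's own proof is a one-line citation of \cite{MazTur2001}; you have instead unpacked the standard argument from that reference. So there is no ``different approach'' to compare---your sketch \emph{is} essentially the Mazorchuk--Turowska argument, just written out.

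Two points to fix. First, a bookkeeping error: with the paper's conventions (linearity in the first slot, $M^\#$ consisting of \emph{conjugate}-linear functionals), your map $\Phi(v)(w)=\langle w,v\rangle$ has the variances backwards---it is linear in $w$ and conjugate-linear in $v$, so $\Phi(v)\notin M^\#$ and $\Phi$ is not $\C$-linear. Take $\Phi(v)(w)=\langle v,w\rangle$ instead; then everything you wrote about (i)--(iv) goes through verbatim.

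Second, and more substantively: the symmetrization step in the converse of (a) is where the actual content lies, and your ``nonzero, hence still bijective'' is not valid---a sum of two isomorphisms between finite-dimensional spaces can have a kernel. The argument in \cite{MazTur2001} does not try to do this for general $M$ directly. One first reduces via Krull--Schmidt (available since weight spaces are finite-dimensional) to indecomposable summands, which $\#$ permutes as an involution. Summands swapped in pairs carry a hyperbolic invariant form; for a summand $N$ with $N\simeq N^\#$ one uses exactly the local-ring fact you invoke for part (b): if $\Phi:N\to N^\#$ is any isomorphism and $\Phi^\dagger$ its Hermitian adjoint, then $\Phi^{-1}\Phi^\dagger\in\End_\CA(N)$ lies in a local ring, so at least one of $1\pm\Phi^{-1}\Phi^\dagger$ is a unit, whence one of $\Phi+\Phi^\dagger$ or $\Bi(\Phi-\Phi^\dagger)$ is an isomorphism and yields a genuine Hermitian invariant form. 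Your instinct to cite \cite{MazTur2001} for the local-ring input is right; you should cite it for this reduction as well, or carry out the Krull--Schmidt step explicitly.
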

\begin{proof} Follows from general results in \cite{MazTur2001}.
\end{proof}

\subsection{Pseudo-unitarizability of $\Delta_\xi$}

The algebras $\CA(\SL)$ become $\ast$-algebras by defining
\begin{equation} \label{eq:AL-star}
H^\ast = H\qquad (X_i^\pm)^\ast = X_i^\mp\qquad \text{for $i=1,2$.}
\end{equation}
In this section we give an explicit invariant inner product on the representation space $\Ga_0(L_\xi)$.
We assume $|\xi|=1$ and write $\xi=e^{2\pi\Bi\ka}$. Every $f\in\Ga_0(L_\xi)$ has the form
\begin{equation}
f(x,y) = \tilde{f}(x\al+y\be)\exp\left(2\pi\Bi \frac{-xm-yn}{m^2+n^2}\ka\right)
\end{equation}
for some unique function $\tilde{f}:F\to\C$ of finite support.

\begin{Theorem} $\Delta_\xi$ is pseudo-unitarizable. More precisely, there exists a weight function $\mathsf{w}:F\to\{+1,-1\}$ such that
\begin{equation} \label{eq:form-def}
\langle f,g\rangle = \sum_{\la\in F} \tilde{f}(\la)\overline{\tilde{g}(\la)} \mathsf{w}(\la)
\end{equation}
is a binary form on $\Ga_0(L_\xi)$ satisfying (i)--(iv) of Definition \ref{def:unitar}.
\end{Theorem}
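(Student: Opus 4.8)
The plan is to construct the weight function $\mathsf{w}:F\to\{+1,-1\}$ explicitly and then verify properties (i)--(iv) directly. Properties (i)--(iii) are immediate: sesquilinearity is built into the formula \eqref{eq:form-def}, symmetry follows because $\mathsf{w}$ is real-valued, and non-degeneracy holds because $\{f_\la\}$ form an orthogonal basis with $\langle f_\la,f_\la\rangle=\mathsf{w}(\la)=\pm1\neq0$. So the entire content is property (iv), the invariance $\langle \Delta_\xi(a)f,g\rangle = \langle f,\Delta_\xi(a^\ast)g\rangle$, which it suffices to check on the algebra generators $H$ and $X_i^\pm$.

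For $H$ invariance is trivial since $H^\ast=H$ and $\Delta_\xi(H)$ acts by the real scalar $x\al+y\be=\la$ on the $\la$-component. The real work is the pair $(X_i^+)^\ast=X_i^-$. Writing out both sides on basis vectors, invariance for $X_1^\pm$ reduces to a recursion of the form
\begin{equation}\label{eq:w-recursion-plan}
\mathsf{w}(\la)\,\overline{q_1(\la-\al/2)} = \mathsf{w}(\la-\al)\, q_1(\la-\al/2)\qquad\text{for all }\la\in F,
\end{equation}
together with the analogous identity for $q_2$ with $\al$ replaced by $\be$. Equivalently $\mathsf{w}(\la)/\mathsf{w}(\la-\al_i) = q_i(e)/\overline{q_i(e)}$ where $e$ is the relevant edge. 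By \eqref{eq:q_i} we have $q_i(e)/\overline{q_i(e)} = \exp(2\pi\Bi\,l_i(e)/2) = (-1)^{l_i(e)}$, which is already $\pm1$ — so the candidate definition is to fix $\mathsf{w}$ on one $F$-coset representative and propagate by
\begin{equation}\label{eq:w-def-plan}
\mathsf{w}(\la+\al) = (-1)^{l_1(\la+\al/2)}\,\mathsf{w}(\la),\qquad \mathsf{w}(\la+\be)=(-1)^{l_2(\la+\be/2)}\,\mathsf{w}(\la).
\end{equation}

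The main obstacle is therefore \emph{consistency}: since $F=\Z\al+\Z\be$ with the single relation $m\al+n\be=0$ (when $m,n>0$), the two propagation rules in \eqref{eq:w-def-plan} must agree around every lattice square, and going $m$ steps of $\al$ then $n$ steps of $\be$ must return the original sign. Commutativity around a unit square, i.e. $l_1(\la+\al/2)+l_2(\la+\al+\be/2) \equiv l_1(\la+\be+\al/2)+l_2(\la+\be/2) \pmod 2$, is precisely the parity reduction of \eqref{eq:li-CMTE} established in Lemma \ref{lem:square-roots} — this is exactly why that lemma was proved for the functions $l_i$ and not just for the $q_i$. The periodicity condition requires checking that $\sum$ of $l_1$-values over a full $\al$-period plus $l_2$-values over a full $\be$-period is even; I would verify this by telescoping the current-conservation identity \eqref{eq:CMTE} (equivalently summing \eqref{eq:li-CMTE-2}) over one fundamental domain, where the finiteness $|\SL_i^{-1}([1,\infty))|<\infty$ guarantees all sums converge and the total vertical and horizontal "winding" contributions cancel in pairs. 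Once consistency is in hand, $\mathsf{w}$ is well-defined on $F$, takes values in $\{\pm1\}$, and by construction satisfies \eqref{eq:w-recursion-plan} and its $q_2$-analogue, hence \eqref{eq:form-def} is an invariant inner product and $\Delta_\xi$ is pseudo-unitarizable. (For the degenerate cases $m=0$ or $n=0$, one of $\al,\be$ is zero and $F$ is a rank-one lattice, so no consistency around squares is needed and the construction is immediate.)
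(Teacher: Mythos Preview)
Your proposal is correct and follows essentially the same route as the paper: reduce $\ast$-invariance on the generators to the sign recursion $\mathsf{w}(\la+\al_i)=(-1)^{l_i(\la+\al_i/2)}\mathsf{w}(\la)$, then verify its consistency via the current-conservation identity \eqref{eq:li-CMTE} for the $l_i$. The paper packages this through a phase function $\omega$ with $\mathsf{w}=\exp(2\pi\Bi\,\omega)$ and appeals to \eqref{eq:l-cong} for well-definedness, whereas you are slightly more explicit in separating the unit-square consistency from the $(m,n)$-loop periodicity check---but this is the same verification, not a different argument.
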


\begin{proof}
The form is invariant iff $\langle \Delta_\xi(X_i^\pm)f,g\rangle=\langle f,\Delta_\xi(X_i^\mp)g\rangle$ because $\CA(\SL)$ is generated by $X_i^\pm$ and $H$, and that $\langle \Delta_\xi(H)f,g\rangle = \langle f,\Delta_\xi(H)g\rangle$ holds is immediate because $\Delta_\xi(H)$ is diagonal with real eigenvalues. We have
\begin{align*}
\langle \Delta_\xi(X_1^\pm)f,g\rangle &= \sum_{\la\in F} q_1(\la\mp \tfrac{\al}{2})\tilde{f}(\la\mp\al)\overline{\tilde{g}(\la)}\mathsf{w}(\la) \\
&=\sum_{\la\in F} q_1(\la\pm\tfrac{\al}{2})\tilde{f}(\la)\overline{\tilde{g}(\la\pm\al)} \mathsf{w}(\la\pm\al)\\
&=\sum_{\la\in F} \tilde{f}(\la)\overline{q_1(\la\pm\tfrac{\al}{2})\tilde{g}(\la\pm\al)}
\left(\frac{q_1(\la\pm\tfrac{\al}{2})}{|q_1(\la\pm\tfrac{\al}{2})|}\right)^2 \mathsf{w}(\la\pm \al)
\end{align*}
and similarly for $X_2^\pm$ and $\be$ which leads to the conditions
\begin{equation}\label{eq:suff-cond-w}
\left(\frac{q_i(\la\pm\tfrac{\al_i}{2})}{|q_i(\la\pm\tfrac{\al_i}{2})|}\right)^2 \mathsf{w}(\la\pm \al_i)=\mathsf{w}(\la)\qquad \text{for $i=1,2$.}
\end{equation}
If \eqref{eq:suff-cond-w} hold then the form $\langle\cdot,\cdot\rangle$ defined by \eqref{eq:form-def} is invariant.
Substituting \eqref{eq:q_i} into \eqref{eq:suff-cond-w} we obtain
\begin{equation} \label{eq:w-diff-eq}
\exp\left(2\pi\Bi l_i(\la\pm\tfrac{\al_i}{2})/2\right) \mathsf{w}(\la\pm\al_i)=\mathsf{w}(\la),\qquad i=1,2
\end{equation}
Again it suffices to take a phase function
\begin{equation}\label{eq:w-def}
\mathsf{w}(\la)=\exp\left(2\pi\Bi \om(\la)\right)
\end{equation}
The system of difference equations for $\om(\la)$ can then be written
\begin{equation}
\om(\la\pm\al_i) \equiv_\Z \om(\la) + \frac{1}{2} l_i(\la\pm\tfrac{\al_i}{2}),\qquad i=1,2
\end{equation}
where $a\equiv_\Z b$ iff $a-b\in\Z$.
For this system to have solutions the $l_i$ must satisfy  consistency equations which can
be written
\begin{equation} \label{eq:l-cong}
\frac{1}{2} l_1(\la-\tfrac{\al_2}{2})+\frac{1}{2}l_2(\la-\tfrac{\al_1}{2}) \equiv_\Z
\frac{1}{2} l_1(\la+\tfrac{\al_2}{2})+\frac{1}{2}l_2(\la+\tfrac{\al_1}{2})
\end{equation}
which actually holds as an equality due to the current conservation \eqref{eq:li-CMTE}.
This proves that the system of difference equations is consistent and with boundary condition $\omega(0)=0$ we obtain the unique solution
\begin{equation} \label{eq:omega-solution}
\om\left(\pm(\al_{i_1}+\al_{i_2}+\cdots+\al_{i_k})\right)=
\frac{1}{2}\sum_{r=1}^k l_{i_r}\left(\pm(\al_{i_1}+\al_{i_2}+\cdots+\al_{i_{r-1}}+\tfrac{\al_{i_r}}{2})\right)
\end{equation}
for any sequence $\un{i}=i_1i_2\ldots i_k\in\mathsf{Seq}_2$. Since $\Z\al_1+\Z\al_2=F$ and $m\al_1+n\al_2=0$ there are non-negative integers $a,b$ such that $F=\langle a\al_1+b\al_2\rangle$ as abelian groups, proving that the elements $\pm(\al_{i_1}+\cdots+\al_{i_k})$ run through all of $F$. Due to \eqref{eq:l-cong} the value of $\om$ modulo $\Z$ is independent of $\un{i}$. This gives a unique solution $\mathsf{w}(\la)$ to \eqref{eq:w-diff-eq} having $\mathsf{w}(0)=1$.

That this form is symmetric $\langle f,g\rangle=\overline{\langle g,f\rangle}$ follows from the fact that $\mathsf{w}(\la)$ is real-valued. Actually $\mathsf{w}(\la)\in\{1,-1\}$ for all $\la\in F$ because $l_i(\la)$ are integer valued hence $\omega(\la)\in\frac{1}{2}\Z$.

Finally $\langle\cdot,\cdot\rangle$ is non-degenerate: If $\langle f,g\rangle=0$ for all $g$, we can pick $g=f_\la$, see \eqref{eq:fk}. Then $\tilde{f_\la}(\mu)=\delta_{\la,\mu}$ where $\delta$ is Kronecker's delta, hence
\[\langle f,f_\la\rangle = \pm \tilde{f}(\la),\qquad \forall \la\in F\]
hence $\tilde{f}(\la)=0$ for all $\la\in F$ which implies that $f$ is identically zero.
\end{proof}

\begin{Remark}
This means we have produced an explicit isomorphism
$\Ga_0(L_\xi)\cong \Ga_0(L_\xi)^\#$, namely
$f\mapsto \langle f,\cdot\rangle$.
\end{Remark}

\begin{Remark}
When $|\xi|=1$, this gives the following independent proof that $\Delta_\xi(\CI)=0$.
Since $\CI$ is a graded ideal with respect to the $\Z^2$-grading on $\CA(\SL)$ it suffices to prove that $\CI_d\in\ker\Delta_\xi$ for each $d\in\Z^2$. Let $a\in\CI_d$. Then $a^\ast a=0$ by \cite[Thm.~3.11(ii)$\Rightarrow$(i)]{Har2016}. Thus we have for any $\la\in F$,
\[\langle \Delta_\xi(a)f_\la,\Delta_\xi(a)f_\la\rangle =
\langle \Delta_\xi(a^\ast a)f_\la,f_\la\rangle = 0.
\]
Since the form is non-degenerate, and all weight spaces are one-dimensional and pairwise orthogonal, there are no nonzero  isotropic weight vectors. This implies that $\Delta_\xi(a)=0$.
\end{Remark}

\begin{Remark}\label{rem:J}
In the language of \cite{Bog1974}, the \emph{fundamental symmetry} operator for the (indefinite) inner product space $\Ga_0(L_\xi)$,
\[J:\Ga_0(L_\xi)\to \Ga_0(L_\xi)\]
is given by
\[J f_\la = \mathsf{w}(\la) f_\la \qquad\text{for all $\la\in F$.}\]
and the $J$-eigenspace decomposition of $\Ga_0(L_\xi)$
\[\Ga_0(L_\xi)=\Ga_0(L_\xi)^+\oplus \Ga_0(L_\xi)^-\]
is the \emph{fundamental decomposition}. On the $+1$ (respectively $-1$) eigenspace the form $\langle\cdot,\cdot\rangle$ is positive (respectively negative) definite.
\end{Remark}

\section{Relation to simple integral weight modules} \label{sec:simple-weight}

In this section we prove that the representations $\Delta_\xi$ are completely reducible. Moreover, every simple integral weight module occurs as a subspace in $\Ga_0(L_\xi)$ for some $\xi$.

\subsection{Unitarity of the Casimir} \label{sec:casimir}

In \cite{Har2016} an $\CA(\SL)$-centralizing element of the localization $\CA(\SL)_{\mathrm{loc}}=\CA(\SL)=\otimes_{\C[H]}\C(H)$ was given.

\begin{Theorem}[{\cite[Prp.~6.3, Thm.~C]{Har2016}}] \label{thm:C}
Consider the element $C\in \CA(\mathscr{L})_{\mathrm{loc}}$ given by
\begin{equation}\label{eq:C}
C=X(\un{i})\prod_{\la\in F} (H-\la)^{-\ord(\un{i},\la)}
\end{equation}
where $\un{i}=i_1i_2\ldots i_{m+n}\in\mathsf{Seq}_2(m,n)$ is a sequence of $m$ $1$'s and $n$ $2$'s in any order, $X(\un{i})=X_{i_{m+n}}^+\cdots X_{i_2}^+X_{i_1}^+$, and $\ord(\un{i},\la)$ is the number (with multiplicity) of vertical (equivalently, horizontal) edges in $\SL$ intersected by the face lattice path $\la, \la+\al_{i_1}, \la+\al_{i_1}+\al_{i_2}, \ldots, \la+\al_{i_1}+\al_{i_2}+\cdots +\al_{i_{m+n}}=\la$.
\begin{enumerate}[{\rm (a)}]
\item $C$ is independent of the choice of $\un{i}$ from $\mathsf{Seq}_2(m,n)$.
\item $C$ belongs to the center of $\CA(\mathscr{L})_\mathrm{loc}$. In particular, $[C,\CA(\mathscr{L})]=0$.
\item $C\in\CA(\SL)$ iff $\SL$ is a five-vertex configuration (every vertex has at most two incident edges with nonzero multiplicity) in which case $Z(\CA(\SL))=\C[C,C^{-1}]$. Otherwise $Z\big(\CA(\SL)\big)=\C$.
\end{enumerate}
\end{Theorem}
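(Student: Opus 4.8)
The plan is to prove all three parts by explicit computation in the localized algebra $\CA(\SL)_{\mathrm{loc}}$, using the defining relations \eqref{eq:rels} together with the MTE \eqref{eq:MTE}. The key observation is that in $\CA(\SL)_{\mathrm{loc}}$ every $X_i^-$ becomes a (two-sided) inverse of $X_i^+$ up to a rational function of $H$: indeed $X_i^+ X_i^- = P_i^\SL(H-\al_i/2)$ and $X_i^- X_i^+ = P_i^\SL(H+\al_i/2)$ are invertible in $\CA(\SL)_{\mathrm{loc}}$, so $X_i^\pm$ acts invertibly and we can rewrite any monomial $X_{i_1}^{\epsilon_1}\cdots X_{i_k}^{\epsilon_k}$ in a normal form of the type ``(product of $X_i^+$'s) times (rational function of $H$)''. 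To prove (a), I would show that replacing a consecutive pair $\ldots i_r i_{r+1}\ldots = \ldots 2 1 \ldots$ by $\ldots 1 2 \ldots$ in $\un{i}$ does not change $C$: this is exactly a single ``$21\to 12$'' move, and using $[X_1^+, X_2^+]$ (which follows from $[X_1^\pm,X_2^\mp]=0$ plus the relations, after localizing — more precisely one checks $X_1^+ X_2^+ = X_2^+ X_1^+$ is \emph{not} a relation, but the combination $X(\un{i})\prod(H-\la)^{-\ord(\un{i},\la)}$ is invariant because the discrepancy between $X_1^+X_2^+$ and $X_2^+X_1^+$ is absorbed by the change in the exponents $\ord(\un{i},\la)$, which is governed precisely by the MTE). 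Since any two sequences in $\mathsf{Seq}_2(m,n)$ are connected by such moves, (a) follows.

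For part (b), I would verify $[C, X_j^\pm] = 0$ and $[C,H]=0$ for each $j$. The relation $[C,H]=0$ is immediate once one checks $X(\un{i}) H = (H - \sum_r \al_{i_r}) X(\un{i}) = H X(\un{i})$, using $m\al_1 + n\al_2 = 0$. For $[C,X_j^+]=0$: commuting $X_j^+$ past $X(\un{i})$ and past the rational function of $H$, and then using freedom in the choice of $\un{i}$ from part (a) to reorder so that the relevant factor is adjacent, reduces everything to the one-variable identity $X_j^+ X_j^- X_j^+ = X_j^+ \cdot P_j^\SL(H+\al_j/2)$ matched against $P_j^\SL(H-\al_j/2)\cdot X_j^+$, which holds by \eqref{eq:rels}. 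The case $[C,X_j^-]=0$ then follows by applying the $\ast$-involution, or directly. This establishes that $C$ is central in $\CA(\SL)_{\mathrm{loc}}$.

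For part (c), the point is whether the rational function $\prod_{\la\in F}(H-\la)^{-\ord(\un{i},\la)}$ can be cleared, i.e.\ whether $X(\un{i})$ is already divisible (on the appropriate side) by the polynomial $\prod_{\la}(H-\la)^{\ord(\un{i},\la)}$ inside $\CA(\SL)$. I would argue that normal-ordering $X(\un{i})$ by moving all $X_1^+$'s to the left of all $X_2^+$'s produces factors $P_i^\SL$ precisely along the face lattice path, and the \emph{five-vertex} condition is exactly the condition that guarantees no cancellation forces a genuine denominator — equivalently, that $\ord(\un{i},\la)$ can be realized by an honest factorization. For the structure of the center: in the five-vertex case one checks $C$ is invertible (its inverse being built from $X(\un{i}^{\mathrm{op}})$ with the $X^-$'s), giving $\C[C,C^{-1}]\subseteq Z(\CA(\SL))$, and the reverse inclusion plus the ``otherwise $Z=\C$'' statement follow from the classification of simple weight modules in \cite{Har2016} (a central element acts by a scalar on each simple module, and the family of simple integral weight modules is rich enough to separate elements not in $\C[C,C^{-1}]$, resp.\ not in $\C$).

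The main obstacle is part (c): carefully controlling the normal-ordering of $X(\un{i})$ and identifying the precise combinatorial condition (five-vertex) under which the product $X(\un{i})\prod_\la (H-\la)^{-\ord(\un{i},\la)}$ lies in $\CA(\SL)$ rather than only in its localization. Since this is quoted from \cite[Prp.~6.3, Thm.~C]{Har2016}, in the present paper I would simply cite that reference for (c) and give the short direct computations for (a) and (b).
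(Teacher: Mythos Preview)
The paper does not prove this theorem at all: it is quoted from \cite[Prp.~6.3, Thm.~C]{Har2016} and used as a black box for the subsequent results (notably Proposition~\ref{prp:C-is-unitary} and Theorem~\ref{thm:M-pseudo}). Your own concluding remark---cite \cite{Har2016} and move on---is already exactly what the paper does, for all three parts, not just (c).

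As for the sketch itself, your outlines of (a) and (b) follow the natural line of argument (elementary $21\leftrightarrow 12$ moves on $\un{i}$ for (a); the degree-zero condition $\sum_r\al_{i_r}=m\al+n\be=0$ for $[C,H]=0$). One point to tighten in (b): you cannot literally ``commute $X_j^+$ past $X(\un{i})$'', since---as you correctly observe in your discussion of (a)---$X_1^+X_2^+$ and $X_2^+X_1^+$ differ by a nontrivial rational function of $H$ in $\CA(\SL)_{\mathrm{loc}}$. The clean fix is to use (a) to pick $\un{i}$ with $i_{m+n}=j$ (or $i_1=j$), so that multiplying $C$ by $X_j^+$ on either side produces $X(\un{i}')$ for a cyclic rotation $\un{i}'$ of $\un{i}$, and then check that $\ord(\un{i},\la)$ transforms compatibly under that rotation. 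Your discussion of (c) is heuristic, and the phrase ``no cancellation forces a genuine denominator'' is not quite the right mechanism, but since you already intend to defer (c) to the reference this is immaterial.
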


\begin{Definition}
We call the element $C$ the \emph{Casimir element} for $\CA(\SL)$.
\end{Definition}

In \cite{Har2016} it was observed that $C^\ast C$ is a constant ($C^\ast C$ has $\Z^2$-degree $(0,0)$ and thus is a polynomial in $H$, but it also commutes with $X_i^\pm$ and is therefore a constant, a priori some rational number). In this subsection we prove that in fact this number equals $1$, i.e. $C$ is unitary. This further shows that $C$ is some canonical object.

First we prove two lemmas.

\begin{Lemma}\label{lem:L-ord}
For any $\la\in F$ and $\un{i}\in\mathsf{Seq}_2(m,n)$ we have (putting $\ell=\ell(\un{i})=m+n$),
\begin{equation}
\sum_{j=1}^\ell \SL_{i_j}(\la+\al_{i_1}+\al_{i_2}+\cdots+\al_{i_{j-1}}+\tfrac{\al_{i_j}}{2})=2\ord(\un{i},\la)
\end{equation}
\end{Lemma}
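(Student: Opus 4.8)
The plan is to interpret both sides of the claimed identity combinatorially, using the edge-counting descriptions already established in Remark \ref{rem:l} and in Theorem \ref{thm:C}. Recall that $\ord(\un{i},\la)$ counts, with multiplicity, the vertical (equivalently horizontal) edges of $\SL$ that are intersected by the closed face-lattice loop $\la \to \la+\al_{i_1} \to \cdots \to \la+\al_{i_1}+\cdots+\al_{i_\ell} = \la$. Since the loop has period $(m,n)$ it crosses the fundamental domain once, and each vertical edge of $\SL$ is crossed by exactly one horizontal step of the loop and each horizontal edge by exactly one vertical step; more to the point, an edge contributes to $\ord(\un{i},\la)$ once for each of its two endpoints that lies on the appropriate side of the loop. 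This ``factor of $2$'' is the source of the $2$ on the right-hand side.

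First I would fix $\un{i}\in\mathsf{Seq}_2(m,n)$ and set $\mu_j = \la+\al_{i_1}+\cdots+\al_{i_{j-1}}$, so the $j$-th step of the loop goes from $\mu_j$ to $\mu_j+\al_{i_j}$ and its midpoint is $\mu_j+\al_{i_j}/2 \in E_{i_j}$. The summand $\SL_{i_j}(\mu_j+\al_{i_j}/2)$ is then the multiplicity, in $\SL$, of the edge of type $i_j$ sitting at that midpoint. Next I would observe that $\SL_{i_j}(\mu_j+\al_{i_j}/2)$ can be rewritten as a difference of the ``cumulative'' functions $l_{i_j}$ from \eqref{eq:li-def}: namely, by definition of $l_i$ as a sum of $\SL_i$ over edges lying strictly above a given point, the single value $\SL_{i_j}(\mu_j+\tfrac{\al_{i_j}}{2})$ equals $l_{i_j}(\mu_j - \tfrac{\al_{i_j}}{2}) - l_{i_j}(\mu_j + \tfrac{\al_{i_j}}{2})$ up to terms that telescope around the loop (using the current-conservation identity \eqref{eq:li-CMTE} / \eqref{eq:li-CMTE-2} to make the telescoping work across the two edge types). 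Summing over $j$ from $1$ to $\ell$, the telescoping collapses the left-hand side to twice a single count of edges above the line through $\la$ — which is precisely $2\ord(\un{i},\la)$ by the geometric description in Theorem \ref{thm:C}. The cleanest way to organize this is: let $n_v$ denote the number of loop-steps whose midpoint sits at a fixed edge $e$ of $\SL$ (this is $0$ unless $e$ is one of the $\ell$ midpoints, in which case it is the multiplicity), and count each edge $e$ weighted by how its two endpoints straddle the loop; linearity in $\SL$ then reduces the whole identity to the single-Dyck-path case, exactly as in the proof of Lemma \ref{lem:square-roots}, where it can be checked directly on the maximum-area path and propagated by the $21\mapsto 12$ move.

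The main obstacle I expect is bookkeeping the ``which side of the loop'' condition consistently for the two edge types simultaneously — i.e. making precise that a vertical edge of $\SL$ above the loop and a horizontal edge above the loop are counted with the same orientation so that the two telescoping sums (over the $1$-steps and over the $2$-steps) assemble into a single clean count $\ord(\un{i},\la)$ rather than two unrelated quantities. This is exactly where the current-conservation rule \eqref{eq:li-CMTE} must be invoked, and where the independence of $\ord(\un{i},\la)$ from the choice of $\un{i}$ (Theorem \ref{thm:C}(a)) is secretly being used. Once that compatibility is pinned down, the reduction to a single generalized Dyck path makes everything a finite check plus an induction, so the analytic content is nil and only the combinatorial indexing requires care.
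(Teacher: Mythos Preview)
Your proposal is vastly more complicated than what is needed, and the detour through the cumulative functions $l_i$, current conservation, and the Dyck-path induction obscures a two-line argument. You already recall in your first sentence the only nontrivial input: $\ord(\un{i},\la)$ counts the vertical edges of $\SL$ crossed by the loop, and this equals the number of horizontal edges crossed. The paper's proof simply observes that the left-hand side is, by definition of $\SL_{i_j}$, the total multiplicity of \emph{all} edges of $\SL$ (both types) crossed by the path---each step $j$ crosses exactly the edge at $\la+\al_{i_1}+\cdots+\al_{i_{j-1}}+\al_{i_j}/2\in E_{i_j}$, and the summand is its multiplicity in $\SL$. Since the path is a closed loop on the cylinder, the number of type-$1$ crossings equals the number of type-$2$ crossings (this is \cite[Lem.~5.11]{Har2016}), and each of these equals $\ord(\un{i},\la)$. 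Hence the sum is $2\ord(\un{i},\la)$.

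Several statements in your first paragraph are also incorrect as written: it is not true that ``each vertical edge of $\SL$ is crossed by exactly one horizontal step of the loop'' (most edges of $\SL$ are not crossed at all), and the assertion that ``an edge contributes to $\ord(\un{i},\la)$ once for each of its two endpoints that lies on the appropriate side of the loop'' is not the definition of $\ord$ and is not a correct reformulation of it. The telescoping scheme in your second paragraph might eventually be made to work, but it would amount to re-proving the vertical--horizontal equality that you already quoted at the outset; there is no need to unpack $\SL_{i_j}$ into differences of $l_{i_j}$ or to reduce to a single Dyck path.
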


\begin{proof}
The left hand side counts the total number of both vertical and horizontal edges in $\SL$ (with multiplicity) that the face path $(\un{i},\la)$ intersects. Since $\un{i}$ is a loop, the number of horizontal edges it intersect from $\SL$ is the same as the number of vertical edges it crosses, and this number is exactly equal to the order of the path $(\un{i},\la)$, see \cite[Lem.~5.11]{Har2016}.
\end{proof}

Put
\begin{equation}\label{eq:quni}
q_{\un{i}}(H) = q_{i_1}(H+\tfrac{\al_{i_1}}{2})q_{i_2}(H+\al_{i_1}+\tfrac{\al_{i_2}}{2})\cdots q_{i_{\ell}}(H+\al_{i_1}+\al_{i_2}+\cdots+\al_{i_{\ell-1}}+\tfrac{\al_{i_\ell}}{2})
\end{equation}
and similarly for $p_{\un{i}}(H)$.
Remarkably, even though each $q_j(H)$ defined in \eqref{eq:q_i} is locally a branch of the square root of the polynomial $P_j^{\SL}(H)$, the following lemma shows that $q_{\un{i}}(H)$ is a polynomial, provided the sequence $\un{i}$ consists of $m$ 1's and $n$ 2's in any order.

\begin{Lemma} \label{lem:q-ord}
For any $\un{i}\in\mathsf{Seq}_2(m,n)$ we have, as functions on $F$,
\begin{equation}\label{eq:q-ord}
q_{\un{i}}(H) = \prod_{\la\in F} (H-\la)^{\ord(\un{i},\la)}.
\end{equation}
\end{Lemma}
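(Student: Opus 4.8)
The plan is to compute the modulus and the phase of $q_{\un{i}}(H)$ separately and show each matches the right-hand side of \eqref{eq:q-ord}.

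First I would handle the modulus. By \eqref{eq:q_i}, each factor $q_{i_j}(H+\al_{i_1}+\cdots+\al_{i_{j-1}}+\tfrac{\al_{i_j}}{2})$ has absolute value $|P_{i_j}^\SL(H+\cdots+\tfrac{\al_{i_j}}{2})|^{1/2}$, and by \eqref{eq:P-def} this equals $\prod_{e\in E_{i_j}}|H+\cdots+\tfrac{\al_{i_j}}{2}-e|^{\SL_{i_j}(e)/2}$. Multiplying over $j=1,\ldots,\ell$ and collecting: for a fixed $\mu\in F$, the total exponent of $|H-\mu|$ in $|q_{\un{i}}(H)|$ is $\tfrac{1}{2}\sum_{j}\SL_{i_j}(\la+\al_{i_1}+\cdots+\al_{i_{j-1}}+\tfrac{\al_{i_j}}{2})$ evaluated at the appropriate shift — more precisely, for evaluation at $H=\la$ the exponent of $|\la-\mu|$ is $\tfrac12$ times the number of edges of type $i_j$ at the midpoint $\la+\al_{i_1}+\cdots+\tfrac{\al_{i_j}}{2}=\mu$, summed over $j$; summing over all $\mu$ and using Lemma \ref{lem:L-ord} gives that $|q_{\un{i}}(\la)| = \prod_{\mu\in F}|\la-\mu|^{\ord(\un{i},\mu-\text{offset})}$... this bookkeeping is exactly what Lemma \ref{lem:L-ord} is designed to resolve, and it shows $|q_{\un{i}}(H)| = \bigl|\prod_{\la\in F}(H-\la)^{\ord(\un{i},\la)}\bigr|$ as functions on $F$.

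Next I would handle the phase. Write $q_{\un{i}}(\la) = \exp(2\pi\Bi\,\Phi(\la))\cdot|q_{\un{i}}(\la)|$ where, collecting the phase factors $\exp(2\pi\Bi\, l_{i_j}(\cdot)/4)$ from \eqref{eq:q_i},
\begin{equation}
\Phi(\la) = \frac{1}{4}\sum_{j=1}^\ell l_{i_j}\bigl(\la+\al_{i_1}+\cdots+\al_{i_{j-1}}+\tfrac{\al_{i_j}}{2}\bigr).
\end{equation}
On the other side, $\prod_{\la\in F}(H-\la)^{\ord(\un{i},\la)}$ evaluated at $\mu\in F$ has phase determined by $\sum_{\la>\mu}\ord(\un{i},\la)$ modulo $2$ (the number of negative factors $(\mu-\la)$ with $\la>\mu$, weighted by exponent). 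So I must show
\begin{equation}
\frac{1}{4}\sum_{j=1}^\ell l_{i_j}\bigl(\mu+\al_{i_1}+\cdots+\al_{i_{j-1}}+\tfrac{\al_{i_j}}{2}\bigr) \equiv \frac{1}{2}\sum_{\la>\mu}\ord(\un{i},\la) \pmod{1}.
\end{equation}
By the combinatorial interpretation of $l_i$ in Remark \ref{rem:l}, $l_{i_j}(e)$ counts edges of type $i_j$ in $\SL$ above the line of slope $n/m$ through $e$; summing the count over the $\ell$ face-path midpoints and comparing with $2\sum_{\la>\mu}\ord(\un{i},\la)$ — the latter counting (with multiplicity, doubled via Lemma \ref{lem:L-ord}) all edges strictly above the starting face $\mu$ — should give equality after recognizing that each edge above the line gets counted the appropriate number of times as the loop path sweeps past it. This is the analogue, at the level of the loop $\un{i}$, of the current-conservation identity \eqref{eq:li-CMTE}.

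The main obstacle I expect is this phase computation: making precise the claim that summing $l_{i_j}$ over the face-path midpoints of the loop $(\un{i},\mu)$ equals twice the number of edges of $\SL$ lying above the face $\mu$ (counted with the multiplicity with which the loop encircles them). I would approach it by the same reduction used in Lemma \ref{lem:square-roots}: the identity is additive in $\SL$, so reduce to $\SL$ a single generalized Dyck path of period $(m,n)$; verify the identity directly for the maximal-area path; then check invariance under replacing a $21$-step by a $12$-step, which changes both sides by a controlled amount. Combining the modulus and phase computations — and noting both sides of \eqref{eq:q-ord} are, as functions on $F$, real analytic where nonvanishing — yields $q_{\un{i}}(H) = \prod_{\la\in F}(H-\la)^{\ord(\un{i},\la)}$ on $F$, in particular showing the a priori branch-of-square-root function $q_{\un{i}}$ is genuinely polynomial.
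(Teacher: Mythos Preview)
Your overall plan---split into modulus and phase, match each to the right-hand side---is correct and is essentially what the paper does. The modulus computation is the same as the paper's: square $q_{\un{i}}$, rewrite each $p_{i_j}(H+\al_{i_1}+\cdots+\al_{i_{j-1}}+\tfrac{\al_{i_j}}{2})$ as a product over $\la\in F$ via the substitution $\la=e-\al_{i_1}-\cdots-\al_{i_{j-1}}-\tfrac{\al_{i_j}}{2}$, and apply Lemma~\ref{lem:L-ord} to collapse the exponents to $2\ord(\un{i},\la)$.

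Your phase argument, however, is over-engineered. The congruence you set up is in fact an \emph{exact equality} and falls out of the same substitution you just used for the modulus. By Lemma~\ref{lem:L-ord},
\[
\sum_{\la>\mu}\ord(\un{i},\la)=\frac{1}{2}\sum_{\la>\mu}\sum_{j=1}^{\ell}\SL_{i_j}\bigl(\la+\al_{i_1}+\cdots+\al_{i_{j-1}}+\tfrac{\al_{i_j}}{2}\bigr);
\]
swap the sums, change variables to $e=\la+\al_{i_1}+\cdots+\al_{i_{j-1}}+\tfrac{\al_{i_j}}{2}\in E_{i_j}$, and recognise $\sum_{e>\mu+\cdots}\SL_{i_j}(e)=l_{i_j}(\mu+\cdots)$ from \eqref{eq:li-def}. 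This gives $\sum_{\la>\mu}\ord(\un{i},\la)=\tfrac{1}{2}\sum_j l_{i_j}(\mu+\al_{i_1}+\cdots+\al_{i_{j-1}}+\tfrac{\al_{i_j}}{2})$ on the nose, so the two phases $\exp\bigl(2\pi\Bi\cdot\tfrac{1}{4}\sum_j l_{i_j}\bigr)$ and $(-1)^{\sum_{\la>\mu}\ord(\un{i},\la)}$ coincide immediately. No Dyck-path induction is needed; the ``main obstacle'' you anticipated does not exist.

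The paper packages the sign step slightly differently: instead of matching phases, it shows that both sides of \eqref{eq:q-ord} are nonnegative at every $\mu\in F$. For $q_{\un{i}}(\mu)$ it identifies $\tfrac{1}{2}\sum_j l_{i_j}(\mu+\cdots)$ with $\omega(\mu+\sum_j\al_{i_j})-\omega(\mu)$ via \eqref{eq:omega-solution} and uses that the loop closes ($\sum_j\al_{i_j}=m\al+n\be=0$); for the product side it uses the exact identity above. This gives the bonus fact $q_{\un{i}}(\mu)\ge 0$, but for the lemma as stated your direct phase-matching, once streamlined as above, is already enough.
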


\begin{proof}
Put $p_i(H)=P_i^\SL(H)$ and $\ell=\ell(\un{i})=m+n$. Then
\begin{equation}\label{eq:qlem-pf0}
\big(q_{\un{i}}(H)\big)^2 =
\prod_{j=1}^\ell p_{i_j}(H+\al_{i_1}+\cdots+\al_{i_{j-1}}+\tfrac{\al_{i_j}}{2}).
\end{equation}
Now note that
\begin{equation}\label{eq:qlem-pf1}
 p_k(H+\tfrac{\al_k}{2}) = \prod_{e\in E_k} (H+\tfrac{\al_k}{2}-e)^{\SL_k(e)}
\end{equation}
since $E_k=F+\al_k/2$ we make the substitition $\la=e-\al_k/2$ to rewrite \eqref{eq:qlem-pf1} as
\begin{equation}\label{eq:qlem-pf2}
p_k(H+\tfrac{\al_k}{2}) = \prod_{\la\in F} (H-\la)^{\SL_k(\la+\tfrac{\al_2}{2})}
\end{equation}
Applying \eqref{eq:qlem-pf2} to each factor in \eqref{eq:qlem-pf0} we get
\begin{align*}
p_{\un{i}}(H)&=\prod_{j=1}^\ell \prod_{\la\in F} 
(H+\al_{i_1}+\cdots+\al_{i_{j-1}}-\la)^{\SL_{i_j}(\la+\tfrac{\al_{i_j}}{2})}\\
&=\prod_{\la\in F} (H-\la)^{\sum_{j=1}^\ell \SL_{i_j}(\la+\al_{i_1}+\cdots+\al_{i_{j-1}}+\tfrac{\al_{i_j}}{2})}\\
&=\prod_{\la\in F} (H-\la)^{2\ord(\un{i},\la)}
\end{align*}
where we used Lemma \ref{lem:L-ord} in the last step. 

It remains to show both sides have the same sign. When evaluating at $H=\mu\in F$, both sides of \eqref{eq:q-ord} have the same zero set, namely the set of all $\mu\in F$ such that $\ord(\un{i},\mu)>0$. So it suffices to show that both sides have the same sign when they are nonzero. In fact we will show that both sides are nonnegative at all $\mu\in F$.
In the left hand side we have by \eqref{eq:quni} and \eqref{eq:q_i},
\begin{align*}
q_{\un{i}}(H) &=\prod_{j=1}^\ell q_{i_j}(H+\al_{i_1}+\cdots+\al_{i_{j-1}}+\tfrac{\al_{i_j}}{2}) \\
&=\exp\left(2\pi\Bi\sum_{j=1}^\ell l_{i_j}(H+\al_{i_1}+\cdots+\al_{i_{j-1}}+\tfrac{\al_{i_j}}{2})/4\right) \\
&\quad\cdot \prod_{j=1}^\ell |p_{i_j}(H+\al_{i_1}+\cdots+\al_{i_{j-1}}+\tfrac{\al_{i_j}}{2})|^{1/2}
\end{align*}
Setting $H=\mu$ in the exponential expression we get after dividing by $2\pi\Bi/2$
\[\frac{1}{2}\sum_{j=1}^\ell l_{i_j}\left(\mu+\al_{i_1}+\cdots+\al_{i_{j-1}}+\tfrac{\al_{i_j}}{2}\right).\]
If $\mu=\al_{k_1}+\cdots+\al_{k_r}$ for some $\un{k}=k_1k_2\cdots k_r\in\mathsf{Seq}_2$, then using \eqref{eq:omega-solution} this equals
\[\omega(\mu+\al_{i_1}+\cdots+\al_{i_\ell})-\omega(\mu)=0\]
since $\sum_{j=1}^\ell \al_{i_j}=m\al+n\be=0$.
A similar argument can be made in the case of $\mu=-(\al_{k_1}+\cdots+\al_{k_r})$. This proves that
\begin{equation}
q_{\un{i}}(\mu)\ge 0\qquad \text{for all $\mu\in F$.}
\end{equation}
Next we prove that the same is true for the product in the right hand side of \eqref{eq:q-ord}. Assuming $\mu$ is not a zero we have
\begin{equation}\label{eq:sgn-prod}
\sgn \left( \prod_{\la\in F} (\mu-\la)^{\ord(\un{i},\la)}\right)=
\sgn\left(
\prod_{\la\in F, \la>\mu} (\mu-\la)^{\ord(\un{i},\la)}\right)
=(-1)^{\sum_{\la\in F, \la>\mu} \ord(\un{i},\la)}.
\end{equation}
Using Lemma \ref{lem:L-ord} we get
\[
 \sum_{\substack{\la\in F \\ \la>\mu}} \ord(\un{i},\mu) 
 = \sum_{\substack{\la\in F \\ \la>\mu}} \frac{1}{2}\sum_{j=1}^\ell \SL_{i_j}\left(\la+\al_{i_1}+\cdots+\al_{i_{j-1}}+\tfrac{\al_{i_j}}{2}\right).
 \]
Interchanging the order of summation and making the change of variables $e=\la+\al_{i_1}+\cdots+\al_{i_{j-1}}+\tfrac{\al_{i_j}}{2}\in E_{i_j}$ we obtain
\[\frac{1}{2}\sum_{j=1}^\ell \sum_{\substack{e\in E_{i_j}\\ e>\mu+\al_{i_1}+\cdots+\al_{i_{j-1}}+\tfrac{\al_{i_j}}{2}}} \SL_{i_j}(e).\]
Now use the definition \eqref{eq:li-def} of $l_j$ to get
\[\frac{1}{2}\sum_{j=1}^\ell l_{i_j}\left(\mu+\al_{i_1}+\cdots+\al_{i_{j-1}}+\tfrac{\al_{i_j}}{2}\right)\]
which as shown above equals zero. This proves that for any $\un{i}\in\mathsf{Seq}_2(m,n)$,
\begin{equation}
\sum_{\substack{\la\in F\\ \la>\mu}} \ord(\un{i},\la) = 0 \qquad\text{for all $\mu\in F$ such that $\ord(\un{i},\mu)=0$}
\end{equation}
and hence by \eqref{eq:sgn-prod},
\begin{equation}
\prod_{\la\in F}(\mu-\la)^{\ord(\un{i},\la)}\ge 0\qquad\text{for all $\mu\in F$.}
\end{equation}
This finishes the proof of the identity \eqref{eq:q-ord}.
\end{proof}

We now prove that $C$ given in \eqref{eq:C} is unitary.

\begin{Proposition} \label{prp:C-is-unitary}
The Casimir element for $\CA(\SL)$ is unitary with respect to $\ast$. That is:
\begin{equation}\label{eq:C-is-unitary}
C^\ast \cdot C = 1 = C\cdot C^\ast
\end{equation}
\end{Proposition}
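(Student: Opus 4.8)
The plan is to compute $C^\ast \cdot C$ directly as an element of $\CA(\SL)_{\mathrm{loc}}$, using the explicit formula \eqref{eq:C} together with the polynomiality result of Lemma \ref{lem:q-ord}. Since $C$ has $\Z^2$-degree $(m,n)$ and $C^\ast$ has degree $(-m,-n)$, the product $C^\ast C$ has degree $(0,0)$, hence lies in $\C(H)$; and by Theorem \ref{thm:C}(b) it commutes with all $X_i^\pm$, so it is in fact a scalar. The content of the proposition is therefore that this scalar is $1$, and to see this it suffices to evaluate the rational function $C^\ast C$ at a single generic point, or equivalently to show it equals the constant function $1$ on the Zariski-dense subset of $H$-weights on which all the relevant factors are nonzero.

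First I would write $C = X(\un{i})\prod_{\la}(H-\la)^{-\ord(\un{i},\la)}$ for a fixed $\un{i}\in\mathsf{Seq}_2(m,n)$, so that $C^\ast = \prod_\la (H-\la)^{-\ord(\un{i},\la)}\,X(\un{i})^\ast$ where $X(\un{i})^\ast = (X_{i_1}^+)^\ast\cdots(X_{i_{m+n}}^+)^\ast = X_{i_1}^-\cdots X_{i_{m+n}}^-$ (note the reversed order, by the anti-automorphism property of $\ast$ and \eqref{eq:AL-star}). The key computation is to move $X(\un{i})^\ast$ past $X(\un{i})$ and collapse the product using the relations $X_i^\pm X_i^\mp = P_i^\SL(H\mp\al_i/2)$ and $[X_1^\pm,X_2^\mp]=0$. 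Carrying the $m+n$ annihilation operators $X_{i_j}^-$ through the $m+n$ creation operators $X_{i_j}^+$ in the right order, each pair $X_{i_j}^- X_{i_j}^+$ contracts to $P_{i_j}^\SL(H+\al_{i_j}/2)$ shifted by the intermediate $\al$'s, so that $X(\un{i})^\ast X(\un{i})$ equals, up to $H$-shift bookkeeping, exactly the product $\prod_{j=1}^{m+n} p_{i_j}(H + \al_{i_1}+\cdots+\al_{i_{j-1}}+\tfrac{\al_{i_j}}{2}) = p_{\un{i}}(H)$ in the notation \eqref{eq:quni}. By the squared identity \eqref{eq:qlem-pf0} and the conclusion of Lemma \ref{lem:q-ord}, this is $\big(q_{\un{i}}(H)\big)^2 = \prod_\la (H-\la)^{2\ord(\un{i},\la)}$.

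Then $C^\ast C = \prod_\la(H-\la)^{-\ord(\un{i},\la)} \cdot X(\un{i})^\ast X(\un{i}) \cdot \prod_\la(H-\la)^{-\ord(\un{i},\la)}$; since the two outer factors are central (polynomials in $H$) I can combine everything to get $\prod_\la(H-\la)^{-2\ord(\un{i},\la)}\cdot\prod_\la(H-\la)^{2\ord(\un{i},\la)} = 1$ as a rational function in $H$, hence $C^\ast C = 1$. For $C C^\ast = 1$ one argues symmetrically, or simply notes that $C$ is invertible in $\CA(\SL)_{\mathrm{loc}}$ with $C^{-1} = \prod_\la(H-\la)^{\ord(\un{i},\la)}X(\un{i})^{-1}$, and a left inverse equal to a right inverse of the same element forces $C^\ast = C^{-1}$ and hence $CC^\ast = 1$ as well.

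The main obstacle is the bookkeeping in the middle step: verifying that the product $X(\un{i})^\ast X(\un{i})$ of two strings of length $m+n$ collapses, after repeated use of $X_i^\pm X_i^\mp = P_i^\SL(H\mp\al_i/2)$ and the commutativity $[X_1^\pm, X_2^\mp]=0$, to precisely $p_{\un{i}}(H)$ with the shifts matching the indexing in \eqref{eq:quni}. One has to be careful that when passing $X_{i_j}^-$ leftward through $X_{i_k}^+$ for $k<j$ with $i_k\neq i_j$ the operators simply commute, while for $i_k = i_j$ (which, since $\un{i}$ has exactly $m$ 1's and $n$ 2's, happens in a controlled nested pattern) they contract in the correct order to produce the asserted shifted factor of $P^\SL$; the cleanest way to organize this is by induction on $\ell(\un{i}) = m+n$, peeling off the outermost creation/annihilation pair $X_{i_1}^+ \cdots X_{i_1}^-$ and reducing to a configuration of type $\mathsf{Seq}_2(m-1,n)$ or $\mathsf{Seq}_2(m,n-1)$. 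Everything else is a routine consequence of centrality and of Lemma \ref{lem:q-ord}.
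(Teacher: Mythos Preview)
Your proof is correct and follows essentially the same route as the paper: write $C^\ast C = \prod_\la(H-\la)^{-\ord(\un{i},\la)}\, X(\un{i})^\ast X(\un{i})\, \prod_\la(H-\la)^{-\ord(\un{i},\la)}$, identify $X(\un{i})^\ast X(\un{i}) = p_{\un{i}}(H)$ (the paper simply cites \cite[Lem.~5.5]{Har2016} for this, whereas you sketch the inductive collapse), and invoke Lemma \ref{lem:q-ord} to cancel. One small correction: the factors $\prod_\la(H-\la)^{-\ord(\un{i},\la)}$ are not central in $\CA(\SL)_{\mathrm{loc}}$ as you say, but they do commute with $X(\un{i})^\ast X(\un{i})$ because the latter has $\Z^2$-degree zero and hence lies in $\C[H]$, which is all you need.
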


\begin{proof} Since $H^\ast=H$ and $\la\in F=\Z$ we have
\[C^\ast C = \prod_{\la\in F}(H-\la)^{-\ord(\un{i},\la)} X(\un{i})^\ast X(\un{i}) \prod_{\la\in F} (H-\la)^{-\ord(\un{i},\la)}
=\prod_{\la\in F} (H-\la)^{-2\ord(\un{i},\la)} X(\un{i})^\ast X(\un{i})\]
By a straightforward calculation (see e.g. proof of \cite[Lem.~5.5]{Har2016})
\[ X(\un{i})^\ast X(\un{i}) = p_{\un{i}}(H) \]
where $p_{\un{i}}(H)$ is as in \eqref{eq:quni} with $p_i=P_i^{\SL}$.
By Lemma \ref{lem:q-ord},
\[p_{\un{i}}(H)=(q_{\un{i}}(H))^2 =  \prod_{\la\in F}(H-\la)^{2\ord(\un{i},\la)}.\]
This finishes the proof.
\end{proof}

\subsection{Pseudo-unitarizablity of simple integral weight modules} \label{sec:pseudo-simple}

We recall the classification of simple integral weight $\CA(\SL)$-modules from \cite{Har2016}.

Consider the space $\T_{m,n}=\R^2/\langle (m,n)\rangle$ equipped with the quotient topology. This space is homeomorphic to a doubly infinite cylinder. Let $\overline{\SL}\subseteq \T_{m,n}$ be the configuration $\SL$ regarded as a union of closed line segments. Let $\overline{\mathsf{F}}\subseteq \T_{m,n}$ be the image of $\Z^2$ under the canonical projection $\R^2\to\T_{m,n}$.

\begin{Theorem}[{\cite[Thm.~B]{Har2016}}] \label{thm:weight}
\begin{enumerate}[{\rm (a)}]
\item There is a bijective correspondence between the set of  isoclasses of simple integral weight $\CA(\SL)$-modules, and the set of pairs $(D,\xi)$ where $D$ is a connected component of $\T_{m,n}\setminus\overline{\mathscr{L}}$ and $\xi\in\C$ with $\xi=0$ iff $D$ is contractible.
\item Let $M(D,\xi)$ be the module corresponding to $(D,\xi)$. Each nonzero weight space $M(D,\xi)_\la$ is one-dimensional and 
\[\Supp\big(M(D,\xi)\big)=\{x_1\al_1+x_2\al_2\mid (x_1,x_2)+\langle(m,n)\rangle\in \overline{\mathsf{F}}\cap D\}.\]
\item For any incontractible $D$ and $\xi\in\C^\times$ the action of the Casimir element $C$ for $\CA(\SL)$ from \eqref{eq:C} is well-defined on $M(D,\xi)$ and $C|_{M(D,\xi)}=\xi\Id_{M(D,\xi)}$.
\end{enumerate}
\end{Theorem}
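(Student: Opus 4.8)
The final statement in the excerpt is Theorem \ref{thm:weight}, the classification of simple integral weight $\CA(\SL)$-modules (parts (a), (b), (c)). However, this is quoted from \cite{Har2016}, so a full reproof is not expected; rather, I will sketch how one establishes these facts in the present context, taking for granted the structural results on $\CA(\SL)$ as a rank-two twisted generalized Weyl algebra.

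\begin{proof}[Proof sketch]
The plan is to pass through the weight space decomposition and reduce the classification to a combinatorial problem on the face lattice $\overline{\mathsf{F}}$. First, since $H$ acts semisimply on any integral weight module $M$ (by definition of \emph{integral weight module}, $M=\bigoplus_{\la\in\Z} M_\la$ with $H|_{M_\la}=\la$), and the relations \eqref{eq:rels} give $X_i^\pm M_\la\subseteq M_{\la\pm\al_i}$, the support $\Supp(M)=\{\la\mid M_\la\neq 0\}$ is stable under the partial translations $\la\mapsto \la\pm\al_i$ \emph{whenever the corresponding operator is nonzero}. The key local obstruction is the relation $X_i^\pm X_i^\mp = P_i^\SL(H\mp\al_i/2)$: on a weight vector $v\in M_\la$ one has $X_i^\pm X_i^\mp v = P_i^\SL(\la\mp\al_i/2)\, v$. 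Hence if $P_i^\SL(\la\mp\al_i/2)\neq 0$ the map $X_i^\mp:M_\la\to M_{\la\mp\al_i}$ is injective and $X_i^\pm:M_{\la\mp\al_i}\to M_\la$ is surjective; the edge of $\SL$ at position $\la\mp\al_i/2$ has multiplicity zero precisely when $P_i^\SL$ does not vanish there. This is exactly the statement that the ``walls'' of $\overline{\SL}$ in the cylinder $\T_{m,n}$ are the places where propagation can fail, so that $\Supp(M)$, intersected with $\overline{\mathsf{F}}$, is forced to lie in the closure of a single connected component $D$ of $\T_{m,n}\setminus\overline{\SL}$ once $M$ is simple. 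Conversely, for each such $D$ one builds a module on $\bigoplus_{\la\in D\cap\overline{\mathsf{F}}} \C v_\la$ by declaring the $X_i^\pm$ to act by the appropriate scalars; consistency of these scalars around loops is governed by the Casimir $C$ of Theorem \ref{thm:C}, and this is where the parameter $\xi$ enters.

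For part (b), one-dimensionality of each nonzero weight space follows from simplicity together with the rank-two TGWA structure: the subalgebra generated by a weight vector and the ``return-to-$\la$'' monomials acts by scalars (since all such monomials are, after the relations, polynomials in $H$), so $\CA(\SL)\cdot v_\la$ already meets $M_\la$ in a line, and by simplicity $M=\CA(\SL)v_\la$. The explicit description of the support is then read off from the observation above: a lattice point $\la+\al_i$ is reachable from $\la$ iff the intervening edge of $\SL$ has multiplicity zero iff the segment from $\la$ to $\la+\al_i$ does not cross $\overline{\SL}$, i.e. iff both points lie in the (closure of the) same component $D$.

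For part (c), when $D$ is not contractible in $\T_{m,n}$ its image in $\overline{\mathsf{F}}$ contains a lattice loop of homotopy class generating $\pi_1(\T_{m,n})\cong\Z$, and such a loop is realized by a sequence $\un{i}\in\mathsf{Seq}_2(m,n)$ of $m$ ones and $n$ twos. By Lemma \ref{lem:q-ord} (applied with $p_i=P_i^\SL$, so $p_{\un{i}}(H)=\prod_\la(H-\la)^{2\ord(\un{i},\la)}$) the order $\ord(\un{i},\la)$ equals the multiplicity with which the loop meets $\overline{\SL}$, which is $0$ for all $\la\in D\cap\overline{\mathsf{F}}$ precisely because $D$ avoids $\overline{\SL}$; thus the denominator in \eqref{eq:C} is invertible on $M(D,\xi)$ and $C$ acts. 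Since $C$ is central (Theorem \ref{thm:C}(b)) and $M(D,\xi)$ is simple, $C$ acts by a scalar, which we define to be $\xi$; that the resulting assignment $(D,\xi)\mapsto M(D,\xi)$ is a bijection onto the isoclasses is then a bookkeeping argument. The main obstacle is the consistency of the scalar action around loops in the contractible case — there the loop bounds a region and the product of the transition scalars is forced to be $1$, pinning down $\xi=0$ (equivalently, the module is not $C$-labeled); checking this requires precisely the MTE \eqref{eq:MTE} and the current conservation \eqref{eq:CMTE}, exactly as in the verification of the relations in the proof that $\Delta_\xi$ is well-defined above. For the full argument we refer to \cite[Thm.~B]{Har2016}.
\end{proof}
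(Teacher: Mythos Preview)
The paper does not prove Theorem~\ref{thm:weight}; it is stated with a citation to \cite[Thm.~B]{Har2016} and no argument is given. You correctly identify this and offer a sketch in lieu of a proof, which is appropriate.

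Two small points where your sketch is imprecise. In part~(c), it is not true that $\ord(\un{i},\la)=0$ for all $\la\in F(D)$ simply ``because $D$ avoids $\overline{\SL}$'': for a fixed $\un{i}\in\mathsf{Seq}_2(m,n)$ the loop based at $\la$ may well leave $D$ and cross edges of $\SL$. The correct argument is that, since $D$ is incontractible, for each $\mu\in F(D)$ one can \emph{choose} $\un{i}$ so that the entire path $\mu,\mu+\al_{i_1},\ldots$ stays inside $D$, whence $\ord(\un{i},\mu)=0$ and the denominator in \eqref{eq:C} is invertible on $M_\mu$; independence of $C$ from $\un{i}$ (Theorem~\ref{thm:C}(a)) then makes this consistent. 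Second, your last sentence about the contractible case is muddled: when $D$ is contractible there is no loop of nontrivial homotopy class inside $D$, hence no monodromy condition and no free parameter; the label $\xi=0$ in Theorem~\ref{thm:weight}(a) is a bookkeeping convention, not the value of some forced product of transition scalars.
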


The following lemma is immediate because the support of an integral weight module is contained in $F$ which is a subset of $\R$.

\begin{Lemma} \label{lem:support-of-finitistic-dual}
If $M$ is a simple integral weight $\CA(\SL)$-module then $\Supp(M)=\Supp(M^\#)$.
\end{Lemma}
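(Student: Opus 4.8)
The plan is to prove the equality of supports by showing both inclusions, and the key observation is that the finitistic dual $M^\#$ is again an integral weight module whose weight spaces are duals of those of $M$. First I would recall from Definition \ref{def:dual} that $M^\# = \bigoplus_{\la\in\C} M^\#_\la$ with $M^\#_\la = \{f:M_\la\to\C \mid f \text{ conjugate-linear}\}$. Since each nonzero weight space $M_\la$ of a simple integral weight module is one-dimensional (Theorem \ref{thm:weight}(b)) and all nonzero weights lie in $F\subseteq\R$, we immediately get $\dim_\C M^\#_\la = \dim_\C M_\la$ for every $\la$: if $M_\la = 0$ then $M^\#_\la = 0$, and if $\dim_\C M_\la = 1$ then $\dim_\C M^\#_\la = 1$. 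Hence $\Supp(M^\#) = \{\la \mid M^\#_\la \neq 0\} = \{\la \mid M_\la \neq 0\} = \Supp(M)$.

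Concretely, the argument comes down to verifying that the $H$-action on $M^\#$ is again semisimple with the same eigenvalues. For $a = H$, $v\in M_\la$ and $f\in M^\#_\la$ we have $(Hf)(v) = f(H^\ast v) = f(Hv) = f(\la v) = \overline{\la}\, f(v) = \la f(v)$, using $H^\ast = H$ from \eqref{eq:AL-star} and that $\la\in F = \Z\al_1+\Z\al_2\subseteq\R$ so $\overline\la = \la$. (Here one should be slightly careful about whether $M^\#_\la$ is the $\la$- or $\overline\la$-eigenspace under the $\CA$-action; because $\Supp(M)\subseteq\R$ these coincide, which is precisely why reality of the support is the hypothesis we need.) Thus $M^\#_\la$ is exactly the $\la$-weight space of $M^\#$, and the support of $M^\#$ as a weight module coincides with the index set of nonzero summands in its defining decomposition.

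I do not expect any serious obstacle here; the statement is genuinely immediate once the finitistic-dual construction is unwound, which is why the lemma is stated without much fanfare. The only point requiring a moment's care — and the reason the hypothesis ``integral weight'' (equivalently, real support) is invoked — is the identification of $M^\#_\la$ with a genuine weight space for the same eigenvalue $\la$ rather than its complex conjugate; for a module with non-real support one would instead get $\Supp(M^\#) = \overline{\Supp(M)}$. Since $\Supp(M)\subseteq F\subseteq\R$, this subtlety evaporates and the two supports agree on the nose.
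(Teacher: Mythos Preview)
Your proposal is correct and matches the paper's approach exactly: the paper simply declares the lemma immediate because the support of an integral weight module lies in $F\subseteq\R$, and you have spelled out precisely why that real-support observation does the work (namely that $M^\#_\la$ is the $\bar\la$-weight space, and $\bar\la=\la$ for $\la\in\R$). The appeal to one-dimensionality of weight spaces is not actually needed---finite-dimensionality suffices for $\dim M^\#_\la=\dim M_\la$---but this does no harm.
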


Using the unitarity of the Casimir $C$ from Proposition \ref{prp:C-is-unitary}, we obtain the following description of the pseudo-unitarizable simple integral weight $\CA(\SL)$-modules.

\begin{Theorem} \label{thm:M-pseudo}
Let $D$ be a connected component of $\T_{m,n}\setminus\overline{\SL}$.
\begin{enumerate}[{\rm (i)}]
\item If $D$ is contractible, then $M(D,0)$ is pseudo-unitarizable.
\item If $D$ is incontractible then $M(D,\xi)$ is pseudo-unitarizable if and only if $|\xi|=1$.
\end{enumerate}
\end{Theorem}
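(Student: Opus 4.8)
The plan is to prove both implications using the criterion of Theorem~\ref{thm:pseudo-unitarizability}(a), namely that $M(D,\xi)$ is pseudo-unitarizable if and only if $M(D,\xi)^\#\simeq M(D,\xi)$, combined with the classification in Theorem~\ref{thm:weight}(a). The strategy is to identify exactly which pair $(D',\xi')$ classifies the finitistic dual $M(D,\xi)^\#$, and then compare.

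First I would observe that $M(D,\xi)^\#$ is again a simple integral weight module: it is simple because duality is an exact contravariant self-inverse functor on the category of weight modules with finite-dimensional weight spaces (so it sends simples to simples), and it is an integral weight module because by Lemma~\ref{lem:support-of-finitistic-dual} its support equals $\Supp(M(D,\xi))\subseteq F$. Hence by Theorem~\ref{thm:weight}(a) there is a unique pair $(D^\#,\xi^\#)$ with $M(D,\xi)^\#\simeq M(D^\#,\xi^\#)$. Comparing supports via Lemma~\ref{lem:support-of-finitistic-dual} and Theorem~\ref{thm:weight}(b) forces $D^\#=D$, since $D$ is determined by $\overline{\mathsf{F}}\cap D$. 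In particular $D$ is contractible if and only if $D^\#$ is, which by Theorem~\ref{thm:weight}(a) means $\xi=0 \iff \xi^\#=0$; this already settles case (i), since then $M(D,0)^\#\simeq M(D,0)$ automatically.

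For case (ii), with $D$ incontractible and $\xi\in\C^\times$, the remaining task is to compute $\xi^\#$ in terms of $\xi$. Here I would use Theorem~\ref{thm:weight}(c): the Casimir $C$ acts on $M(D,\xi)$ as the scalar $\xi$. On the dual module $M(D,\xi)^\#$, by Definition~\ref{def:dual} the action of any $a\in\CA(\SL)_{\mathrm{loc}}$ is $(af)(v)=f(a^\ast v)$, so $C$ acts on $M(D,\xi)^\#$ as the scalar $\overline{\text{(scalar by which }C^\ast\text{ acts on }M(D,\xi))}$. Now invoke Proposition~\ref{prp:C-is-unitary}: since $C^\ast C=1$, the element $C^\ast$ acts on $M(D,\xi)$ as $\xi^{-1}$, and hence $C$ acts on $M(D,\xi)^\#$ as $\overline{\xi^{-1}}=\overline{\xi}{}^{-1}=1/\overline{\xi}$. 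Applying Theorem~\ref{thm:weight}(c) to $M(D^\#,\xi^\#)=M(D,\xi^\#)$ gives $\xi^\#=1/\overline{\xi}$.

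Combining, $M(D,\xi)^\#\simeq M(D,1/\overline{\xi})$, so by Theorem~\ref{thm:pseudo-unitarizability}(a) the module $M(D,\xi)$ is pseudo-unitarizable if and only if $(D,\xi)$ and $(D,1/\overline{\xi})$ classify the same module, i.e.\ if and only if $\xi=1/\overline{\xi}$, which is equivalent to $|\xi|^2=\xi\overline{\xi}=1$. This is exactly the assertion of (ii). The main obstacle I anticipate is the bookkeeping needed to justify that $M(D,\xi)^\#$ is simple and integral and that its classifying component is again $D$ — that is, cleanly reducing everything to the known classification and the unitarity of $C$; once that is in place, the scalar computation via $C^\ast C=1$ is short. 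An alternative to the duality-functor argument for simplicity of $M(D,\xi)^\#$ would be to note directly that $M(D,\xi)$, being simple with one-dimensional weight spaces, has $M(D,\xi)^\#$ with one-dimensional weight spaces on the same support, and that any weight module on a single $\overline{\mathsf{F}}\cap D$-orbit with one-dimensional weight spaces is forced by the relations \eqref{eq:rels} to be simple (as in \cite{Har2016}); either route works.
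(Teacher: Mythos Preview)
Your proof is correct and follows essentially the same route as the paper: both use the criterion $M\simeq M^\#$ from Theorem~\ref{thm:pseudo-unitarizability}, identify $D^\#=D$ via Lemma~\ref{lem:support-of-finitistic-dual}, and compute $\xi^\#=\bar\xi^{-1}$ from the unitarity of $C$ (Proposition~\ref{prp:C-is-unitary}). You are somewhat more explicit than the paper about why $M(D,\xi)^\#$ is again a simple integral weight module, but otherwise the arguments coincide.
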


\begin{proof}
By Theorem \ref{thm:pseudo-unitarizability}, a simple weight module $M$ is pseudo-unitarizable if and only if $M^\#\simeq  M$.

(i) Put $M=M(D,0)$. By  Theorem \ref{thm:weight}, since $\Supp(M^\#)=\Supp(M)=D$ which is contractible, it follows that $M^\#\simeq M$ hence $M$ is pseudo-unitarizable.

(ii) By Theorem \ref{thm:weight} and Lemma \ref{lem:support-of-finitistic-dual}, for any $\xi\in\C^\times$ there exists $\xi^\#\in\C^\times$ such that $M(D,\xi)^\#\simeq M(D,\xi^\#)$. Recall that $\xi$ has the interpretation as being the eigenvalue of $C$. By Proposition \ref{prp:C-is-unitary}, $C^\ast=C^{-1}$ and thus for any $f\in M^\#$ and $v\in M$,
\[ (C f)(v)=f(C^\ast v) = f(\xi^{-1}v)=(\bar\xi^{-1}f)(v).\]
This proves that $\xi^\#=\bar\xi^{-1}$. By the classification theorem again, $M(D,\xi)\simeq M(D,\bar\xi^{-1})$ if and only if $\xi=\bar\xi^{-1}$ or equivalently, $|\xi|=1$.
\end{proof}

\subsection{Decomposition of $\Ga_0(L_\xi)$ into irreducibles} \label{sec:semi}

Put
\begin{equation}
M_0=\bigoplus_D M(D,0)\qquad M_\xi = \bigoplus_{D'} M(D',\xi)
\end{equation}
where $D$ (respectively $D'$) runs over the set of contractible (respectively incontractible) connected components of $\T_{m,n}\setminus\overline\SL$, and $\xi\in\C^\times$ is fixed.

\begin{Proposition} \label{prp:semi}
For any $\xi\in\C^\times$ there is an isomorphism of $\CA(\SL)$-modules 
\begin{equation}
\Ga_0(L_{\xi}) = M_0\oplus M_\xi.
\end{equation}
\end{Proposition}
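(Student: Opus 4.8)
\textbf{Proof proposal for Proposition \ref{prp:semi}.} The plan is to decompose $\Ga_0(L_\xi)$ directly into weight spaces and then identify the indecomposable pieces with the modules $M(D,\cdot)$ via the classification in Theorem \ref{thm:weight}. First I would observe that $\Ga_0(L_\xi)$ is an integral weight module: the basis $\{f_\la\mid\la\in F\}$ from \eqref{eq:fk} diagonalizes $\Delta_\xi(H)$ with eigenvalue $\la\in F\subseteq\R$ on $f_\la$, and each weight space is one-dimensional. Hence $\Ga_0(L_\xi)$ is a direct sum of its (at most countably many) simple subquotients supported on $F$, and by Theorem \ref{thm:weight}(b) each such simple subquotient is some $M(D,\eta)$ with $\Supp M(D,\eta)=\overline{\mathsf{F}}\cap D$ a union of $F$-points in a connected component $D$ of $\T_{m,n}\setminus\overline{\SL}$. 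The key combinatorial input is that the edges of $\SL$ are exactly the points where the operators $\Delta_\xi(X_i^\pm)$ vanish: by \eqref{eq:Delta-def} and property (i) of Lemma \ref{lem:square-roots}, $\Delta_\xi(X_i^\pm)f_\la = q_i(\la\mp\al_i/2)f_{\la\mp\al_i}$, and $q_i(\la\mp\al_i/2)=0$ precisely when $P_i^\SL(\la\mp\al_i/2)=0$, i.e. when the face-lattice step from $\la$ to $\la\mp\al_i$ crosses an edge of $\SL$ (with nonzero multiplicity). Therefore the submodule of $\Ga_0(L_\xi)$ generated by $f_\la$, as $\la$ ranges over one connected component $D$, has weight support exactly $\overline{\mathsf{F}}\cap D$, the $F$-lattice points in a single region, and no larger.

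Next I would decompose $\Ga_0(L_\xi)=\bigoplus_D \Ga_0(L_\xi)^{(D)}$, where $\Ga_0(L_\xi)^{(D)}=\bigoplus_{\la\in\overline{\mathsf{F}}\cap D}\C f_\la$, and show each summand is an $\CA(\SL)$-submodule (this follows from the previous paragraph: applying a generator either stays in the same region or lands on a zero). It then remains to identify $\Ga_0(L_\xi)^{(D)}$ with a simple module $M(D,\eta)$ for the appropriate $\eta$. If $D$ is contractible, its $F$-support is a finite interval, so $\Ga_0(L_\xi)^{(D)}$ is finite-dimensional; one checks it is generated by any nonzero weight vector (since the only relations forcing a subquotient are the vanishing at the boundary edges, and these bound the region) so it is simple, hence $\cong M(D,0)$. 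If $D$ is incontractible, then $\overline{\mathsf{F}}\cap D$ wraps around the cylinder and the period-$(m,n)$ identification in the line bundle enters: here I would compute the action of the Casimir $C$ from \eqref{eq:C} on $\Ga_0(L_\xi)^{(D)}$. Since $X(\un{i})$ advances a weight vector once around the cylinder and $\prod_\la(H-\la)^{-\ord(\un{i},\la)}$ divides out exactly the zeros picked up along the way (using Lemma \ref{lem:q-ord}), the operator $\Delta_\xi(C)$ acts on $f_\la$ by a nonzero scalar; comparing with the monodromy $f(x-m,y-n)=\xi f(x,y)$ built into $L_\xi$ and using $q_{\un{i}}(H)=\prod_\la(H-\la)^{\ord(\un{i},\la)}\ge0$ on $F$, that scalar is precisely $\xi$. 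Hence by Theorem \ref{thm:weight}(c) the summand is the simple module $M(D,\xi)$.

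Finally, assembling the regions: the contractible $D$ contribute $\bigoplus_D M(D,0)=M_0$ and the incontractible $D'$ contribute $\bigoplus_{D'}M(D',\xi)=M_\xi$, giving $\Ga_0(L_\xi)=M_0\oplus M_\xi$. The main obstacle I anticipate is the incontractible case: one must verify carefully that $\Ga_0(L_\xi)^{(D)}$ really is \emph{simple} (not merely indecomposable) — equivalently, that the monodromy scalar $\xi$ is the only obstruction to splitting off a proper submodule — and that the Casimir eigenvalue computed from the explicit difference operators matches the $\xi$ in the line-bundle twist on the nose rather than up to some sign or power. This is where Lemma \ref{lem:q-ord} (the polynomiality of $q_{\un{i}}$ with the correct sign) and the nonnegativity statement $q_{\un{i}}(\mu)\ge0$ proved there do the essential work, pinning down the scalar without ambiguity; modulo that, the bookkeeping over regions is routine given Theorem \ref{thm:weight}.
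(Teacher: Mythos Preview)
Your proposal is correct and follows essentially the same route as the paper: decompose $\Ga_0(L_\xi)$ along connected components $D$ using that $\Delta_\xi(X_i^\pm)$ vanishes exactly across edges of $\SL$, and for incontractible $D$ compute $\Delta_\xi(X(\un{i}))=\xi\, q_{\un{i}}(\tilde H)$ and invoke Lemma~\ref{lem:q-ord} to obtain $\Delta_\xi(C)=\xi$, then appeal to Theorem~\ref{thm:weight}. One minor slip: the action should read $\Delta_\xi(X_i^\pm)f_\la = q_i(\la\pm\al_i/2)\,f_{\la\pm\al_i}$ (your signs are reversed), but this does not affect the argument.
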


\begin{proof}
Each $H$-weight space of $\Ga_0(L_\xi)$ is one-dimensional, spanned by $f_k$, $k\in\Z$. 
For each connected component $D$ of $\T_{m,n}\setminus\overline{\SL}$, there is a submodule of $\Ga_0(L_\xi)$ whose support is exactly $D$.
By the characterizing properties of the simple integral weight modules from Theorem \ref{thm:weight}, it remains to prove that if $D$ is an incontractible component and $f_k$ is one of the basis vectors where $k\in F(D)$, 
then $\Delta_\xi(C) f_k = \xi f_k$. 

Let $\un{i}\in\mathsf{Seq}_2(m,n)$. We have 
\begin{multline*}
\big(\Delta_\xi(X(\un{i}))f\big)(x,y) = 
\big(\Delta_\xi (X_{i_\ell}\cdots X_{i_1})f\big)(x,y) \\
\shoveleft{
=q_{i_\ell}(x\al+y\be-\tfrac{\al_{i_\ell}}{2})q_{i_{\ell-1}}(x\al+y\be-\al_{i_\ell}-\tfrac{\al_{i_{\ell-1}}}{2})\cdots }\\ 
\shoveright{
\cdots 
q_{i_1}(x\al+y\be-\al_{i_\ell}-\al_{i_{\ell-1}}-\cdots-\al_{i_2}-\tfrac{\al_{i_1}}{2})\cdot
f((x,y)-\Be_{i_\ell}-\cdots-\Be_{i_1}) }\\
=\big(q_{\un{i}}(\tilde{H}) f\big)(x-m,y-n)
=\xi q_{\un{i}}(\tilde{H}) \cdot f(x,y)
\end{multline*}
We have shown that as operators on $\Ga_0(L_\xi)$ we have
\begin{equation} \label{eq:delta-on-Xi}
\Delta_\xi(X(\un{i})) = \xi q_{\un{i}}(\tilde{H})
\end{equation}
where $\tilde{H}=\Delta_\xi(H)$.
Consider the centralizing element $C\in\CA(\SL)_{\mathrm{loc}}$ given by \eqref{eq:C}. We have 
\begin{gather}
\Delta_\xi(C) = \Delta_\xi\left(X(\un{i})\prod_{\la\in F}(H-\la)^{-\ord(\un{i},\la)}\right) = 
\xi q_{\un{i}}(\tilde H) \prod_{\la\in F} (\tilde{H}-\la)^{-\ord(\un{i},\la)} = \xi
\end{gather}
where we used Lemma \ref{lem:q-ord} in the last step.
We abused notation by applying $\Delta_\xi$ to an element of the localization, but the resulting operator is well-defined on any $f_\la$ for $\la$ in an incontractible component.
This finishes the proof.
\end{proof}

\section{On the signature of the invariant inner product on $M(D,\xi)$ and internal eight-vertex configurations} \label{sec:signature}

\subsection{Unitarizable simple integral weight $\CA(\SL)$-modules}
By Theorem \ref{thm:weight} each simple integral weight $\CA(\SL)$-module $M$ is isomorphic to $M(D,0)$ with $D$ contractible or $M(D,\xi)$ with $D$ incontractible and $\xi\in\C^\times$. 
As we saw in Theorem \ref{thm:M-pseudo}, the former are always pseudo-unitarizable and the latter iff $|\xi|=1$.
In this case there is a unique up to nonzero real multiple admissible form $\langle \cdot,\cdot\rangle$ on $M$ by Theorem \ref{thm:pseudo-unitarizability}. Thus, identifying $M$ with a submodule of $\Ga_0(L_\xi)$ as in Proposition \ref{prp:semi}, we may define the \emph{signature of $M$} to be 
\begin{equation}
\si(M) = \{\dim M^+, \dim M^-\}
\end{equation}
where $M^\pm$ are the $\pm 1$ eigenspaces of the fundamental symmetry $J$ from Remark \ref{rem:J}. All this just amounts to the following formula
\begin{equation}\label{eq:s-def}
\sigma(M)=\{s_+,s_-\}\qquad 
s_\pm = \#\{\la\in F\mid \text{$\pm\langle v,v\rangle\ge 0$ for all $v\in M_\la $}\}.
\end{equation}
for some choice of invariant inner product $\langle\cdot,\cdot\rangle$. Changing the form to $-\langle\cdot,\cdot\rangle$ does not change $\sigma(M)$ as a set. Thus $\sigma(M)$ depends only on $M$ and not on the choice of invariant inner product on $M$.
We say that $M$ is \emph{definite} if $0\in \sigma(M)$. Thus $M$ is definite if and only if $M$ is unitarizable.

\begin{Lemma} \label{lem:signature}
The sign of the quadratic form $v\mapsto \langle v,v\rangle$  are the same on two adjacent weight spaces of weights $\la$ and $\la+\al_i$ if and only if $P_i^{\SL}(e)>0$ where $e=\la+\al_i/2$ is the (midpoint of the) edge separating the weight spaces.
\end{Lemma}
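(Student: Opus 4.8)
The plan is to reduce the statement to a direct computation using the explicit invariant inner product on $\Ga_0(L_\xi)$ constructed in Section~\ref{sec:pseudo}, and the characterization of $\sgn P_i^\SL$ in terms of the combinatorial function $l_i$ from Lemma~\ref{lem:square-roots}. First I would realize $M$ as a submodule of $\Ga_0(L_\xi)$ via Proposition~\ref{prp:semi}, so that the invariant form on $M$ is the restriction of $\langle\cdot,\cdot\rangle$ from \eqref{eq:form-def}, and the sign of $v\mapsto\langle v,v\rangle$ on the one-dimensional weight space $M_\la$ (which is spanned by $f_\la$, up to scalar) is exactly $\mathsf{w}(\la)$. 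Hence the two adjacent weight spaces $M_\la$ and $M_{\la+\al_i}$ carry the same sign if and only if $\mathsf{w}(\la)=\mathsf{w}(\la+\al_i)$.

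Next I would invoke the defining difference equation \eqref{eq:suff-cond-w} for $\mathsf{w}$, which gives
\[
\mathsf{w}(\la+\al_i)\Big(\frac{q_i(\la+\tfrac{\al_i}{2})}{|q_i(\la+\tfrac{\al_i}{2})|}\Big)^2=\mathsf{w}(\la),
\]
so $\mathsf{w}(\la)=\mathsf{w}(\la+\al_i)$ precisely when $\big(q_i(e)/|q_i(e)|\big)^2=1$, where $e=\la+\al_i/2$. Since $q_i(e)^2=P_i^\SL(e)$ by Lemma~\ref{lem:square-roots}(i), we have $\big(q_i(e)/|q_i(e)|\big)^2 = P_i^\SL(e)/|P_i^\SL(e)|$, which equals $1$ if and only if $P_i^\SL(e)>0$ (the case $P_i^\SL(e)=0$ cannot occur since the weight spaces $M_\la$, $M_{\la+\al_i}$ are both nonzero, so the edge $e$ lies in the interior of the component $D$ and has multiplicity zero in $\SL$, giving $P_i^\SL(e)\neq 0$). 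This is exactly the asserted equivalence.

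Alternatively, and perhaps more transparently, one can argue directly inside $M$ without passing through $\mathsf{w}$: pick nonzero weight vectors $v\in M_\la$ and $w=X_i^+ v\in M_{\la+\al_i}$ (assuming $w\neq 0$, which holds since both weight spaces are nonzero and $M$ is simple), and compute
\[
\langle w,w\rangle = \langle X_i^+ v, X_i^+ v\rangle = \langle (X_i^+)^\ast X_i^+ v, v\rangle = \langle X_i^- X_i^+ v, v\rangle = P_i^\SL(\la+\tfrac{\al_i}{2})\,\langle v,v\rangle,
\]
using the $\ast$-structure \eqref{eq:AL-star} and the relation $X_i^- X_i^+ = P_i^\SL(H-\al_i/2)$ from \eqref{eq:rels}, evaluated on the weight vector $v$ of weight $\la+\al_i$. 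Thus $\langle w,w\rangle$ and $\langle v,v\rangle$ have the same sign iff $P_i^\SL(\la+\al_i/2)>0$, and the same sign iff their product is positive; the nonvanishing of $P_i^\SL(e)$ again follows because $e$ separates two nonzero weight spaces of $M$, forcing $\SL_i(e)=0$.

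The only real subtlety — the main obstacle — is justifying that $w=X_i^+v$ (or $X_i^-v$, depending on the labeling convention for which of $\la,\la+\al_i$ is the larger) is nonzero, i.e. that the two adjacent weight spaces are genuinely connected by the generator action within the simple module $M$; this uses the description of $\Supp(M)$ from Theorem~\ref{thm:weight}(b) together with the fact that an edge in the interior of a component $D$ has $\SL_i$-multiplicity zero, so $P_i^\SL(e)\neq 0$ and the raising/lowering operator restricts to an isomorphism between the two one-dimensional weight spaces. Once this is in place, the sign comparison is the one-line computation above, and the lemma follows.
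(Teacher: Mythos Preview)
Your second, ``alternative'' computation is exactly the paper's proof: the paper simply writes
\[
\langle X_i^+ v_\la,\, X_i^+ v_\la\rangle
 = \langle X_i^- X_i^+ v_\la,\, v_\la\rangle
 = P_i^\SL\big(\la+\tfrac{\al_i}{2}\big)\langle v_\la, v_\la\rangle
\]
and stops there. Note one small slip in your write-up: from \eqref{eq:rels} the correct relation is $X_i^- X_i^+ = P_i^\SL(H+\al_i/2)$, not $P_i^\SL(H-\al_i/2)$; applied to $v$ of weight $\la$ this directly gives $P_i^\SL(\la+\al_i/2)$, so your parenthetical ``evaluated on the weight vector $v$ of weight $\la+\al_i$'' is not needed and is a bit confused.

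Your first route via the explicit weight function $\mathsf{w}$ and the difference equation \eqref{eq:suff-cond-w} is a genuine alternative. It is correct and in some sense more concrete (it identifies the sign on $M_\la$ with $\mathsf{w}(\la)$ itself), but it depends on the particular realization inside $\Ga_0(L_\xi)$ and on Proposition~\ref{prp:semi}, whereas the paper's argument works for any invariant inner product on any weight module with one-dimensional weight spaces. Your careful justification that $X_i^+ v\neq 0$ (equivalently $P_i^\SL(e)\neq 0$) for an internal edge $e$ is a point the paper leaves implicit; it is worth keeping.
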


\begin{figure}
\centering
\begin{tikzpicture}
\node[font=\scriptsize, below] at (0,0) {$\la$};
\draw (-2pt,-2pt) -- ( 2pt, 2pt);
\draw ( 2pt,-2pt) -- (-2pt, 2pt);
\node[font=\scriptsize, below] at (1,0) {$\la+\al_1$};
\draw (1cm-2pt,-2pt) -- (1cm+2pt, 2pt);
\draw (1cm+2pt,-2pt) -- (1cm-2pt, 2pt);
\draw[Red] (.5,.5) -- (.5,-.5);
\end{tikzpicture}
\caption{Adjacent weight spaces in the support of a simple integral weight module.}
\label{fig:Psign}
\end{figure}

\begin{proof}
Suppose that $v_\la\in V$ is a nonzero eigenvector of $H$ with eigenvalue $\la$. Then
\begin{equation}
\label{eq:P-norm-ratio}
 \langle X_i^+ v_\la,\, X_i^+ v_\la\rangle 
 = \langle X_i^- X_i^+ v_\la,\, v_\la\rangle 
 = \langle P_i^\SL\big(H+ \frac{\al_i}{2}\big)v_\la,\, v_\la\rangle 
 = P_i^\SL\big(\la+\frac{\al_i}{2}\big)\langle v_\la, v_\la\rangle.
\end{equation}
\end{proof}

Recall the cylinder $\T_{m,n}=\R^2/\langle(m,n)\rangle$. For any subset $D\subseteq \T_{m,n}$ we put
\begin{equation}
E_i(D) = \{x\al+y\be\in E_i \mid (x,y)\in (\Z^2+\tfrac{1}{2}\Be_i)\cap D\}.
\end{equation}
where $\Be_1=(1,0)$ and $\Be_2=(0,1)$. Thus $E_1(D)$ (respectively $E_2(D)$) is the set of vertical (respectively horizontal) edges that when drawn in a fundamental domain in $\R^2$ have their midpoint inside $D$. Similarly we put
\begin{gather*}
F(D)=\{x\al+y\be\in F\mid (x,y)\in \Z^2\cap D\}, \\
V(D)=\{x\al+y\be\in V\mid (x,y)\in \big(\Z^2+\tfrac{1}{2}(\Be_1+\Be_2)\big)\cap D\}.
\end{gather*}
We call elements of these sets \emph{internal} edges, faces and vertices in $D$.

The following gives a characterization of unitarizable simple integral weight $\CA(\SL)$-modules.
\begin{Proposition}
Let $M(D,\xi)$ be a pseudo-unitarizable simple integral weight $\CA(\SL)$-module. Then the following statements are equivalent.
\begin{enumerate}[{\rm (i)}]
\item $M(D,\xi)$ is unitarizable.
\item $\mathsf{w}|_{F(D)}$ is constant where $\mathsf{w}$ was defined in \eqref{eq:w-def},\eqref{eq:omega-solution}.
\item $P_i^{\SL}(e_i)>0$ for all $e_i\in E_i(D)$ and $i\in\{1,2\}$.
\item $l_i(e_i)\in 2\N$ for all $e_i\in E_i(D)$ and $i\in\{1,2\}$, where $l_i$ was defined in \eqref{eq:li-def}.
\item At each internal vertical (respectiely horizontal) edge $e$ in $D$, there are an even number, counted with multiplicity, of vertical (respectively horizontal) edges occurring in $\SL$ whose midpoints are above the line through the midpoint of $e$ of slope $n/m$.
\end{enumerate}
\end{Proposition}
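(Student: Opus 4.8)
The plan is to prove the chain of equivalences (i) $\Leftrightarrow$ (ii) $\Leftrightarrow$ (iii) $\Leftrightarrow$ (iv) $\Leftrightarrow$ (v), using the results already established: Lemma \ref{lem:signature}, the formula \eqref{eq:s-def} for the signature, Proposition \ref{prp:semi} identifying $M(D,\xi)$ with a submodule of $\Ga_0(L_\xi)$, and the combinatorial interpretation of $l_i$ in Remark \ref{rem:l}. Most of these equivalences will be essentially reformulations, so the work is in organizing them cleanly rather than in any single hard estimate.

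First I would handle (i) $\Leftrightarrow$ (ii). By Proposition \ref{prp:semi} we identify $M=M(D,\xi)$ with the span of $\{f_\la : \la\in F(D)\}$ inside $\Ga_0(L_\xi)$, and by Remark \ref{rem:J} the fundamental symmetry acts by $Jf_\la=\mathsf{w}(\la)f_\la$, so $M^\pm$ is spanned by those $f_\la$ with $\mathsf{w}(\la)=\pm1$. Then $0\in\sigma(M)$ — i.e.\ $M$ is definite, equivalently unitarizable by the discussion preceding the Proposition — precisely when $\mathsf{w}|_{F(D)}$ takes only one value. (One should note that $F(D)$ is connected in the sense that any two of its elements are joined by a path of internal edges staying in $D$, which is what makes the local condition equivalent to the global constancy statement; this connectivity is inherited from the connectedness of $D$ together with Theorem \ref{thm:weight}(b).)

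Next, (ii) $\Leftrightarrow$ (iii): by Lemma \ref{lem:signature}, the sign of $v\mapsto\langle v,v\rangle$ agrees on the weight spaces of weights $\la$ and $\la+\al_i$ exactly when $P_i^\SL(\la+\al_i/2)>0$; and by the relation $Jf_\la=\mathsf{w}(\la)f_\la$ together with \eqref{eq:P-norm-ratio}, the sign of $\langle v_\la,v_\la\rangle$ is $\mathsf{w}(\la)$ up to a global sign. Walking along all internal edges of $D$, constancy of $\mathsf{w}|_{F(D)}$ is therefore equivalent to $P_i^\SL(e)>0$ for every internal edge $e$, which is (iii). Then (iii) $\Leftrightarrow$ (iv) is immediate from \eqref{eq:PvsL}: $P_i^\SL(e)/|P_i^\SL(e)|=(-1)^{l_i(e)}$, so $P_i^\SL(e)>0$ iff $l_i(e)$ is even, i.e.\ $l_i(e)\in2\N$. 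Finally (iv) $\Leftrightarrow$ (v) is just unwinding the combinatorial description of $l_i(e)$ from Remark \ref{rem:l}: $l_i(e)$ is the number, with multiplicity, of edges of $\SL$ of the same orientation as $e$ whose midpoints lie above the line of slope $n/m$ through the midpoint of $e$.

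The main obstacle I anticipate is the connectivity point buried in (i) $\Leftrightarrow$ (ii) and (ii) $\Leftrightarrow$ (iii): one must be careful that the \emph{local} condition (that signs agree across each individual internal edge, equivalently that each $l_i(e)$ is even) really does force the \emph{global} statement that $\mathsf{w}$ is constant on all of $F(D)$. This requires knowing that the internal faces of $D$ form a connected subgraph of the face lattice under the adjacency given by internal edges — which follows because $D$ is a connected open subset of $\T_{m,n}$ and, by Theorem \ref{thm:weight}(b), $\Supp(M(D,\xi))$ corresponds exactly to $\overline{\mathsf F}\cap D$ — but it is the one place where a genuine (if short) topological argument is needed rather than a formal manipulation. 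Everything else is bookkeeping with the formulas already in hand.
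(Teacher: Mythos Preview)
Your proposal is correct and follows essentially the same chain of equivalences as the paper's proof, which simply cites ``noted above'' for (i)$\Leftrightarrow$(ii), Lemma \ref{lem:signature} for (ii)$\Leftrightarrow$(iii), \eqref{eq:PvsL} for (iii)$\Leftrightarrow$(iv), and Remark \ref{rem:l} for (iv)$\Leftrightarrow$(v). Your added attention to the connectivity of $F(D)$ in the step (iii)$\Rightarrow$(ii) is a detail the paper leaves implicit, but your identification of it is apt and your handling of it is fine.
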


\begin{proof}
(i)$\Leftrightarrow$(ii) was noted above. (ii)$\Leftrightarrow$(iii) follows from Lemma \ref{lem:signature}. (iii)$\Leftrightarrow$(iv) is immediate by \eqref{eq:PvsL}. Lastly (iv)$\Leftrightarrow$(v) follows from Remark \ref{rem:l}.
\end{proof}

\subsection{Internal eight-vertex configurations and the signature of $M(D,\xi)$} \label{sec:eight}

We turn to the final problem of calculating the signature of $M(D,\xi)$ as defined in the previous subsection.
Define the \emph{internal vertex configuration in $D$} to be $\SL^D=(\SL^D_1,\SL^D_2)$ where $\SL^D_i: E_i(D)\to \{1,-1\}$ is given by
\begin{equation}
\SL_i^D(e) = \sgn P_i^\SL(e)\qquad\text{for $e\in E_i$ and $i=1,2$.}
\end{equation}
Note that $P_i^{\SL}(e)\neq 0$ at every $e\in E_i(D)$. We interpret the value $+1$ as the edge being absent in the configuration $\SL^D$, and $-1$ as an edge present of multiplicity one. In figures edges in $\SL^D$ will be drawn dashed in red. We make the following observation.

\begin{Lemma}
Let $D$ be any connected component of $\T_{m,n}\setminus \overline{\mathscr{L}}$. Then $\SL^D$ is an eight-vertex configuration in $D$. That is, at each internal vertex $v\in V(D)$, there are exactly eight possible local configurations, see Figure \ref{fig:eight-vertex}. 
\end{Lemma}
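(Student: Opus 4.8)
The plan is to show that the "eight-vertex condition" is really just the statement that the current conservation rule \eqref{eq:CMTE} descends to a parity constraint on the sign functions $\SL_i^D$, and then to enumerate the solutions of that constraint at a single internal vertex. First I would recall that by definition $\SL_i^D(e)=\sgn P_i^\SL(e)$ and that, by \eqref{eq:PvsL}, $\sgn P_i^\SL(e)=(-1)^{l_i(e)}$; moreover by the current conservation identity \eqref{eq:li-CMTE} the four integers $l_1(v\pm\be/2)$, $l_2(v\pm\al/2)$ attached to the four edges incident to an internal vertex $v\in V(D)$ satisfy
\begin{equation*}
l_1(v+\be/2)+l_2(v+\al/2)=l_1(v-\be/2)+l_2(v-\al/2).
\end{equation*}
Reducing this equality modulo $2$ and exponentiating gives the multiplicative relation
\begin{equation*}
\SL_1^D(v+\be/2)\cdot\SL_2^D(v+\al/2)=\SL_1^D(v-\be/2)\cdot\SL_2^D(v-\al/2)
\end{equation*}
among the four $\{1,-1\}$-values surrounding $v$. (One must check that all four of these edges indeed lie in $E_i(D)$, i.e. that an internal vertex of $D$ is surrounded by internal edges of $D$; this is immediate since $D$ is a connected component of the complement of $\overline{\SL}$, so the closed edges adjacent to an internal vertex cannot meet $\overline{\SL}$ and hence their midpoints lie in $D$.)

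Next I would interpret this relation combinatorially in the language of the six/eight-vertex model: translating "$-1$" as "edge present with multiplicity one" and "$+1$" as "edge absent," an edge contributes a factor $-1$ exactly when it is present, so the product relation above says precisely that the total number of edges of $\SL^D$ incident to $v$ from the pair $\{$top, right$\}$ has the same parity as the number incident from the pair $\{$bottom, left$\}$ — equivalently, the total number of edges of $\SL^D$ incident to $v$ is even. Thus at every internal vertex exactly $0$, $2$, or $4$ of the four incident half-edges carry an edge of $\SL^D$. Counting the local pictures: there is $1$ configuration with no incident edges, $\binom{4}{2}=6$ configurations with exactly two incident edges, and $1$ configuration with all four incident edges, for a total of $1+6+1=8$. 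This is exactly the list of admissible vertices of the (symmetric, zero-field) eight-vertex model, which is what Figure \ref{fig:eight-vertex} displays, so $\SL^D$ is an eight-vertex configuration in $D$.

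The only genuinely delicate point, and the one I would spend the most care on, is the verification that the four edges adjacent to an internal vertex of $D$ are themselves internal edges of $D$ — i.e. that the local picture at $v$ genuinely lives inside the single region $D$ and is not split across several components of $\T_{m,n}\setminus\overline{\SL}$. This is where one uses that $\overline{\SL}$ is drawn as a union of closed line segments along the edges of the lattice: a point in the relative interior of an edge adjacent to $v$, or the point $v$ itself, lies on $\overline{\SL}$ only if that entire edge belongs to $\overline{\SL}$, but then $v\in\overline{\SL}$ would fail to be an internal vertex of $D$; so if $v\in V(D)$ then the open half-edges at $v$ miss $\overline{\SL}$ and, lying in a connected set containing $v$, must lie in the same component $D$. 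With this observation in hand, the parity argument of the previous paragraph applies verbatim and the lemma follows; everything else is the bookkeeping already summarized above, and $P_i^\SL(e)\neq0$ on $E_i(D)$ was noted before the statement, so the sign function $\SL_i^D$ is well defined throughout.
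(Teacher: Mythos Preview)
Your argument is correct and essentially coincides with the paper's: the paper simply applies the MTE $P_1^\SL(v+\be/2)P_2^\SL(v+\al/2)=P_1^\SL(v-\be/2)P_2^\SL(v-\al/2)$ at an internal vertex, observes both sides are nonzero, and takes signs, whereas you reach the same multiplicative sign relation via the detour $\sgn P_i^\SL=(-1)^{l_i}$ together with \eqref{eq:li-CMTE} reduced mod $2$. Your version is slightly longer but adds two things the paper leaves implicit: the verification that the four edges around an internal vertex are themselves internal, and the explicit $1+\binom{4}{2}+1=8$ count.
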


\begin{proof}
At an internal vertex $v$, both sides of
\[
P_1^\SL(v+\al_2/2)
P_2^\SL(v+\al_1/2)=
P_1^\SL(v-\al_2/2)
P_2^\SL(v-\al_1/2)
\]
are nonzero. Taking signs on both sides the claim follows.
\end{proof}

\begin{figure}[h]
\centering 
\begin{tikzpicture}
\begin{scope}[color=Red,style=dashed,thick]
\draw ( 0,-4) -- ( 0,-2);
\draw (-1,-3) -- ( 1,-3);
\draw ( 2, 0) -- ( 4, 0);
\draw ( 3,-4) -- ( 3,-2);
\draw ( 6,-1) -- ( 6, 0) -- ( 7, 0);
\draw ( 5,-3) -- ( 6,-3) -- ( 6,-2);
\draw ( 9, 1) -- ( 9, 0) -- (10, 0);
\draw ( 8,-3) -- ( 9,-3) -- ( 9,-4);
\end{scope}
\fill ( 0, 0) circle (2pt);
\fill ( 0,-3) circle (2pt);
\fill ( 3 ,0) circle (2pt);
\fill ( 3,-3) circle (2pt);
\fill ( 6, 0) circle (2pt);
\fill ( 6,-3) circle (2pt);
\fill ( 9, 0) circle (2pt);
\fill ( 9,-3) circle (2pt);
\end{tikzpicture}
\caption{Local eight-vertex configurations.}
\label{fig:eight-vertex}
\end{figure}
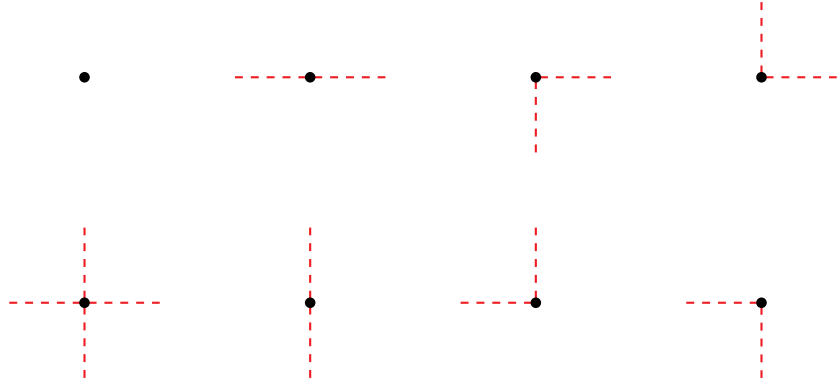

We now describe an algorithm for determining the signature of a simple integral weight module $M(D,\xi)$ over a noncommutative Kleinian fiber product $\CA(\SL)$.

Consider internal edges in $D$. For each vertical edge $e\in E_1(D)$, draw a straight line through the midpoint of the edge such that the line has slope $n/m$. Then count the number (with multiplicity) of vertical edges in $\SL$ whose midpoint is above that line. If that number is odd, color the edge $e$ red. Otherwise leave it transparent. Repeat that for each vertical edge in $D$. Then carry out the analogous procedure for horizontal edges in $D$. After all internal edges of $D$ have been either colored red or left transparent, the red edges will form the eight-vertex configuration $\SL^D$.
Then, by removing the union $\overline{\SL^D}$ of line segments corresponding to red edges in $D$, this breaks $D$ further into   subcomponents $D^{(j)}$:
\begin{equation}\label{eq:subcomp}
D\setminus \overline{\SL^D}=\bigsqcup_j D^{(j)}
\end{equation}
where $\sqcup$ means disjoint union. On each sum of weight spaces 
\[M(D,\xi)^{(j)}=\bigoplus_{\la\in F(D^{(j)})} M(D,\xi)_\la\]
the invariant inner product is positive or negative definite, by Lemma \ref{lem:signature}. Moreover if two connected components $D^{(j)}$ and $D^{(j')}$ are adjacent then the invariant inner product is positive definite on one of them and negative definite on the other. Thus by two-coloring the decomposition \eqref{eq:subcomp} of $D$ and then counting the number $c_i$ of connected components $D^{(j)}$ of each color $i\in\{+,-\}$, that gives the signature of $M(D,\xi)$:
\begin{equation}
\sigma\big(M(D,\xi)\big)=\{c_+,\,c_-\}.
\end{equation}
In particular the signature is independent of $\xi$ when $|\xi|=1$.

\section{Examples} \label{sec:examples}

\begin{Example}
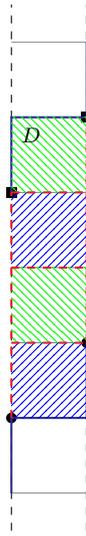
\begin{figure}[h]
\centering
\begin{tikzpicture}
\foreach \y in {0,1,...,6} {
\draw[help lines] (0 cm,\y cm) -- (1 cm,\y cm); }
\draw[dashed] (0,-.5 cm) -- (0,6.5cm);
\draw[dashed] (1,-.5 cm) -- (1,6.5cm);
\fill (0,1) circle (2pt);
\fill (1,2) circle (2pt);
\fill (0cm-2pt,4cm-2pt) rectangle (0cm+2pt,4cm+2pt);
\fill (1cm-2pt,5cm-2pt) rectangle (1cm+2pt,5cm+2pt);
\draw[thick,Blue] (0,0) -- (0,1) -- (1,1) -- (1,2);
\draw[thick,Blue] (0,4) -- (0,5) -- (1,5) -- (1,6);
\fill[pattern=north east lines, pattern color=Blue] (0,1) rectangle (1,2);
\fill[pattern=north east lines, pattern color=Blue] (0,3) rectangle (1,4);
\fill[pattern=north west lines, pattern color=Green] (0,2) rectangle (1,3);
\fill[pattern=north west lines, pattern color=Green] (0,4) rectangle (1,5);
\draw[thick,Red,dashed] (0,1) -- (0,4);
\draw[thick,Red,dashed] (1,2) -- (1,5);
\draw[thick,Red,dashed] (0,2) -- (1,2);
\draw[thick,Red,dashed] (0,3) -- (1,3);
\draw[thick,Red,dashed] (0,4) -- (1,4);
\node[font=\scriptsize, below right] at (0,5) {$D$}; 
\end{tikzpicture}
\caption{A $(1,1)$-periodic six-vertex configuration with two paths.}
\label{fig:11-2}
\end{figure}
Figure \ref{fig:11-2} shows a fundamental domain for $\T_{1,1}=\R^2/\langle(1,1)\rangle$ with the blue solid edges constituting the $d=4$ case of the $(1,1)$-periodic configuration $\SL$ with
\[P_1^{\SL}(u)=P_2^{\SL}(u)=\big(u-\frac{1}{2}\big)\big(u-\frac{1}{2}-d\big)\]
where $d$ is a positive integer. It was shown in \cite{Har2016} that the corresponding noncommutative Kleinian fiber product $\CA(\SL)$ is related to $d$-dimensional evaluateion modules for the affine Lie algebra $A_1^{(1)}$ and to a finite W-algebra associated to $\Fsl_4$. $\T_{1,1}\setminus\overline{\SL}$ consists of three connected components, one of which is finite and denoted $D$. Since $D$ has the homotopy type of a circle, there is a one-parameter family of $d$-dimensional simple integral weight $\CA(\SL)$-modules $M(D,\xi)$. Coloring the internal edges as in the algorithm in Section \ref{sec:eight}, we obtain that all internal edges are red.
Thus if $|\xi|=1$ then $M(D,\xi)$ is pseudo-unitarizable with signature
\[
\si(M(D,\xi))=\begin{cases}\{d/2,d/2\}& \text{$d$ even}\\
\{(d-1)/2, (d+1)/2\}& \text{$d$ odd}
\end{cases}
\]
If we instead of $\ast$ consider the ``Chevalley'' involution $\dagger$ on $\CA(\SL)$ given by $(X_i^\pm)^\dagger = -X_i^\mp$, $H^\dagger=H$, then this is equivalent to changing signs of the polynomials $P_i^\SL(u)$, hence $\SL^D$ changes into $-\SL^D$, meaning now all internal edges in $D$ are transparent. This recovers the well-known unitarizability of these loop modules regarded as modules over the affine Lie algebra $A_1^{(1)}$.
\end{Example}

\begin{Example}
\begin{figure}[h]
\centering
\begin{tikzpicture}
\foreach \y in {-1,0,...,2} {
\draw[help lines] (0 cm,\y cm) -- (5 cm,\y cm); }
\foreach \x in {1,...,4} {
\draw[help lines] (\x cm,-1.5 cm) -- (\x cm,2.5 cm); }
\draw[dashed] (0,-1.5 cm) -- (0,2.5cm);
\draw[dashed] (5,-1.5 cm) -- (5,2.5cm);
\fill (0,0) circle (2pt);
\fill (5,2) circle (2pt);
\draw[thick,Blue] (0,-1) -- (0,-.5pt) -- (2,-.5pt) -- (2,1) -- (5,1) -- (5,2);
\draw[thick,Blue] (0,.5pt) -- (1,.5pt) -- (1,1) -- (2,1) -- (2,2) -- (5,2);
\fill[pattern=north east lines, pattern color=Blue] (3,1) rectangle (5,2);
\fill[pattern=north west lines, pattern color=Green] (2,1) rectangle (3,2);
\draw[thick,Red,dashed] (3,2) -- (3,1);
\node[font=\scriptsize, below right] at (1,1) {$D_1$}; 
\node[font=\scriptsize, below right] at (2,2) {$D_2$};
\end{tikzpicture}
\caption{A fundamental domain for a $(5,2)$-periodic configuration $\SL$ consisting of two vertex paths. $M(D_1,0)$ is unitarizable, while $M(D_2,0)$ has signature $\{1,2\}$.}
\label{fig:52-2}
\end{figure}
Consider the $(5,2)$-periodic higher spin vertex configuration $\SL$ in Figure \ref{fig:52-2} consisting of the two vertex lattice paths $1121112$ and $1212111$ ($1$ being a step right and $2$ being a step up) with the same starting point. Removing these line segments from the doubly infinte cylinder $\T_{5,2}=\R^2/\langle(5,2)\rangle$ leaves four connected components, two of which are finite, $D_1$ of area $1$ and $D_2$ of area $3$. Thus the noncommutative Kleinian fiber product $\CA(\SL)$ has exactly two finite-dimensional simple modules, the one-dimensional $M(D_1,0)$ and the three-dimensional $M(D_2,0)$. The former is unitarizable (since there are no internal edges to check) while the latter has one red internal vertical edge, meaning an edge where $P_1(e)<0$. By the algorithm in Section \ref{sec:eight} this implies the signature of $M(D_2,0)$ is $\{1,2\}$.
\end{Example}

\begin{Example}
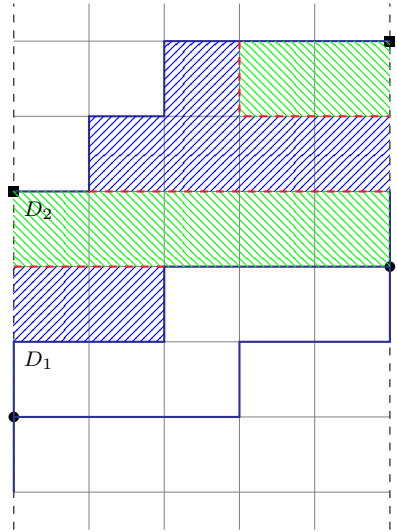
\begin{figure}[h]
\centering
\begin{tikzpicture}
\foreach \y in {-1,0,...,5} {
\draw[help lines] (0 cm,\y cm) -- (5 cm,\y cm); }
\foreach \x in {1,...,4} {
\draw[help lines] (\x cm,-1.5 cm) -- (\x cm,5.5 cm); }
\draw[dashed] (0,-1.5 cm) -- (0,5.5cm);
\draw[dashed] (5,-1.5 cm) -- (5,5.5cm);
\fill (0,0) circle (2pt);
\fill (5,2) circle (2pt);
\fill (0cm-2pt,3cm-2pt) rectangle (0cm+2pt,3cm+2pt);
\fill (5cm-2pt,5cm-2pt) rectangle (5cm+2pt,5cm+2pt);
\draw[thick,Blue] (0,-1) -- (0,0) -- (3,0) -- (3,1) -- (5,1) -- (5,2);
\draw[thick,Blue] (0,0) -- (0,1) -- (2,1) -- (2,2) -- (5,2) -- (5,3);
\draw[thick,Blue] (0,3) -- (1,3) -- (1,4) -- (2,4) -- (2,5) -- (5,5);
\fill[pattern=north east lines, pattern color=Blue] (0,1) rectangle (2,2);
\fill[pattern=north east lines, pattern color=Blue] (1,3) rectangle (5,4);
\fill[pattern=north east lines, pattern color=Blue] (2,4) rectangle (3,5);
\fill[pattern=north west lines, pattern color=Green] (0,2) rectangle (5,3);
\fill[pattern=north west lines, pattern color=Green] (3,4) rectangle (5,5);
\draw[thick,Red,dashed] (0,2) -- (2,2);
\draw[thick,Red,dashed] (1,3) -- (5,3);
\draw[thick,Red,dashed] (3,5) -- (3,4) -- (5,4);
\node[font=\scriptsize, below right] at (0,1) {$D_1$}; 
\node[font=\scriptsize, below right] at (0,3) {$D_2$}; 
\end{tikzpicture}
\caption{A $(5,2)$-periodic example with $\SL$ consisting of three vertex paths. $M(D_1,0)$ is definite, however $M(D_2,\xi)$ has signature $\{7,7\}$ for each $\xi\in\C^\times$, $|\xi|=1$.}
\label{fig:52-3}
\end{figure}
In Figure \ref{fig:52-3}, $\CA(\SL)$ has one $6$-dimensional simple module $M(D_1,0)$ and a one-parameter family of $14$-dimensional simple modules $M(D_2,\xi)$, $\xi\in\C^\times$. Using the algorithm in Section \ref{sec:eight} one checks that the module $M(D_1,0)$ is unitariable. For $|\xi|=1$ the module $M(D_2,\xi)$ is pseudo-unitarizable of signature $\{7,7\}$.
\end{Example}

\begin{Example}
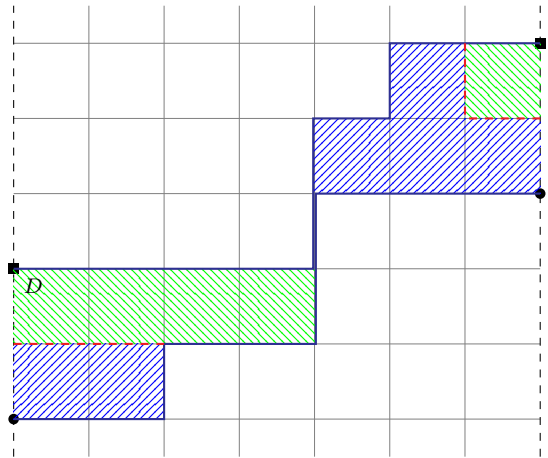
\begin{figure}[h]
\centering
\begin{tikzpicture}
\foreach \y in {0,1,...,5} {
\draw[help lines] (0 cm,\y cm) -- (7 cm,\y cm); }
\foreach \x in {1,...,6} {
\draw[help lines] (\x cm,-.5 cm) -- (\x cm,5.5 cm); }
\draw[dashed] (0,-.5 cm) -- (0,5.5cm);
\draw[dashed] (7,-.5 cm) -- (7,5.5cm);
\fill (0,0) circle (2pt);
\fill (7,3) circle (2pt);
\fill (-2pt,2cm-2pt) rectangle (2pt,2cm+2pt);
\fill (7cm-2pt,5cm-2pt) rectangle (7cm+2pt,5cm+2pt);
\fill[pattern=north west lines, pattern color=Green] (0,1) rectangle (4,2);
\fill[pattern=north west lines, pattern color=Green] (6,4) rectangle (7,5);
\fill[pattern=north east lines, pattern color=Blue] (0,0) rectangle (2,1);
\fill[pattern=north east lines, pattern color=Blue] (4,3) rectangle (7,4);
\fill[pattern=north east lines, pattern color=Blue] (5,4) rectangle (6,5);
\draw[thick,Blue] (0,0) -- (2,0) -- (2,1) -- (4cm+.5pt,1) -- (4cm+.5pt,3) -- (7,3);
\draw[thick,Blue] (0,2) -- (4cm-.5pt,2) -- (4cm-.5pt,4) -- (5,4) -- (5,5) -- (7,5);
\draw[thick,Red,dashed] (0,1) -- (2,1);
\draw[thick,Red,dashed] (6,5) -- (6,4) -- (7,4);
\node[font=\scriptsize, below right] at (0,2) {$D$}; 
\end{tikzpicture}
\caption{A $(7,3)$-periodic configuration consisting of two paths. The signature of $M(D,0)$ is $\{5,6\}$.}
\label{fig:73-2}
\end{figure}
Figure \ref{fig:73-2} shows a $(7,3)$-periodic configuration $\SL$ such that the algebra $\CA(\SL)$ has a unique finite-dimensional simple module $M(D,0)$. This module has dimension $11$ and signature $\{5,6\}$. 
\end{Example}

\bibliographystyle{siam}

\begin{thebibliography}{10}
\bibitem{Bog1974}
  {\sc J. Bogn\'{a}r}
  {\em Indefinite inner product spaces}
  Springer 1974.
\bibitem{Har2011}
  {\sc J.T. Hartwig}
  {\em Pseudo-unitarizable weight modules over generalized Weyl algebras}
  J. Pure Appl. Algebra 215 Issue 10 (2011) 2352--2377.
\bibitem{Har2016}
  {\sc J.T. Hartwig}
  {\em Noncommutative fiber products and lattice models}
  arXiv:1612:08125.
\bibitem{HarRos2016}
  {\sc J.T. Hartwig and D. Rosso}
  {\em Cylindrical Dyck paths and the Mazorchuk-Turowska equation}
  J. Algebraic Combin. 44 Issue 1 (2016) 223-247.
\bibitem{Hod1993}
  {\sc T.J. Hodges}
  {\em Noncommutative deformations of type-A Kleinian singularities}
  J. Algebra 161 (1993) 271--290.
\bibitem{KacRai1987}
  {\sc V.G. Kac and A.K. Raina}
  {\em Bombay Lectures on highest weight representations of infinite-dimensional Lie algebras}
  Advanced Series in Mathematical Physics Vol. 2, World Scientific 1987.
\bibitem{Kna1996}
  {\sc A.W. Knapp}
  {\em Lie Groups Beyond an Introduction}
  Progress in Mathematics Vol. 140, Birkh\"auser 1996.
\bibitem{MazTur1999}
  {\sc V. Mazorchuk and L. Turowska}
  {\em Simple weight modules over twisted generalized Weyl algebras}
  Comm. Algebra 27 Issue 6 (1999) 2613--2625.
\bibitem{MazTur2001}
  {\sc V. Mazorchuk and L. Turowska} 
  {\em Existence and unicity of $\sigma$-forms on finite-dimensional modules}
  Methods Funct. Anal. Topology 7 Issue 1 (2001) 53--62.
\bibitem{MazTur2001b}
  {\sc V. Mazorchuk and L. Turowska}
  {\em $\sigma$-unitarizability of simple weight modules}
  In: Proc. Int. Conf. Information Processes of Formation of Open Systems (2001) 253--257.
\bibitem{MazTur2002}
  {\sc V. Mazorchuk and L. Turowska}
  {\em $\ast$-Representations of twisted generalized Weyl constructions}
  Algebr. Represent. Theory 5 (2002) 163--186.
\end{thebibliography}

\end{document}